\documentclass[hidelinks,onefignum,onetabnum]{siamart250211}

\usepackage{stmaryrd,MnSymbol}
\usepackage{graphicx}
\usepackage{subcaption}
\usepackage{caption}
\usepackage{framed}
\usepackage{booktabs}
\usepackage[color=yellow]{todonotes}

\newcommand{\pp}[2]{\frac{\partial #1}{\partial #2}}

\DeclareMathOperator{\diff}{d}

\newtheorem{remark}[theorem]{Remark}

\usepackage{color}

\DeclareMathOperator{\mxd}{mixed}
\newcommand{\Smixed}{S_{\mxd}}
\newcommand{\Hmixed}{H_{\mxd}}

\DeclareMathOperator{\prm}{primal}
\newcommand{\Sprimal}{S_{\prm}}
\newcommand{\Hprimal}{H_{\prm}}

\usepackage{mathtools}
\DeclarePairedDelimiter\ceil{\lceil}{\rceil}


\usepackage{fancybox}
\begin{document}
\title{Shifted HSS solvers for the indefinite Helmholtz
  equation} \author{C. J. Cotter, J. Hope-Collins and K. Knook}
\maketitle

\begin{abstract}
  We provide an iterative solution approach for the indefinite
  Helmholtz equation discretised using finite elements, based upon a
  Hermitian Skew-Hermitian Splitting (HSS) iteration applied to the
  shifted operator, and prove that the iteration is $k$- and
  mesh-robust when $\mathcal{O}(k)$ HSS iterations are performed.  The
  HSS iterations involve solving a shifted operator that is suitable
  for approximation by multigrid using standard smoothers and transfer
  operators, and hence we can use $\mathcal{O}(N)$ parallel processors
  in a high performance computing implementation, where $N$ is the
  total number of degrees of freedom.  We argue that the algorithm
  converges in $\mathcal{O}(k)$ wallclock time when within the range
  of scalability of the multigrid. We provide numerical results in
  both 2D and 3D verifying our proofs and demonstrating this claim,
  establishing a method that can make use of large scale high
  performance computing systems.
\end{abstract}

\begin{keywords}
  Indefinite Helmholtz equation, Hermitian skew-Hermitian splitting,
  preconditioning, finite elements, multigrid
\end{keywords}

\begin{MSCcodes}
  65F08, 65N30, 65Y05, 35J05
\end{MSCcodes}

\section{Introduction}

In this article, we consider the indefinite Helmholtz
equation\footnote{This formulation deviates slightly from the usual
parameters $-k^2-i\epsilon$; when $\delta \ll k$ we have $(-\delta +
ik)^2 \approx -k^2 -2i\delta k$, so $\epsilon\approx 2\delta k$; this
choice is made to simplify formulae later in a mixed formulation.},
\begin{equation}
  \label{eq:helmholtz strong primal}
  (-\delta + ik)^2u - \nabla^2 u = f,
\end{equation}
solved in the domain $\Omega$ with boundary $\partial\Omega$,
with boundary conditions $u=0$ on $\partial\Omega_0\subset \partial\Omega$ and
$\pp{u}{n}-(-\delta+ik)u=0$ on $\Gamma \subset \partial\Omega$, with
$\partial\Omega=\partial\Omega_0+\Gamma$, and real parameters
$\delta\geq 0$ and $k>0$. This
equation arises from the time Fourier transformation of the wave
equation in time with coefficient $k$ and absorption parameter
$\delta$, with Sommerfeld radiation boundary conditions on $\Gamma$;
it serves as a prototype for more complicated wave systems such as
those arising in electromagnetism, elastodynamics, etc.

The design of efficient and scalable iterative solvers for
\eqref{eq:helmholtz strong primal} continues to be a challenging area
of scientific computing when $|k|\gg \delta$, as surveyed by
\cite{ernst2011difficult}. A number of sophisticated preconditioners
have been developed. These include the analytic preconditioners which
use Pad\'e approximations of integral operators \cite[and citations
  therein]{darbas2013combining}; these are mainly suited to
homogeneous media.  There is a large literature on domain
decomposition methods \cite[for
  example]{galkowski2025convergence,graham2017domain,gander2007optimized,vion2014double}
that combine the solution on (possibly overlapping) patches with
various boundary conditions. Sweeping preconditioners
\cite{engquist2011sweeping,engquist2011a_sweeping} cut the domain into
subdomains, using Sommerfeld conditions or perfectly matched layers on
the interior boundaries to approximate the Schur complement
eliminating the solution away from those boundaries. By using a
checkerboard pattern to organise the subdomains, \cite{taus2020sweeps}
achieved a parallel wallclock time complexity of $\mathcal{O}((N/p)\log k)$ where
$N=n^d$ is the number of degrees of freedom in \(d\) spatial dimensions and $p=\mathcal{O}(n)$ is
the total number of processors. Time domain preconditioning uses
timestepping methods for the time domain wave equation to construct
preconditioners for the frequency domain Helmholtz equation
\cite{stolk2021time}.

In this article, we consider the solution of the indefinite Helmholtz
problem using multigrid methods with classical smoothers (such as
scaled Jacobi iteration) and classical grid transfer operators, in
combination with shift preconditioning. The challenge is that on the
one hand, classical multigrid methods with standard smoothers and
transfer operators only work well when $\delta = \mathcal{O}(k)$ as
$k\to\infty$ \cite{cocquet2017large}. On the other hand, we typically
want to solve problems with $\delta=\mathcal{O}(1)$ ($\delta=0$, in
particular). Recently, \cite{dwarka2023stand} presented a multigrid
approach using standard smoothers but modified transfer operators
adapted to the large $k$ situation. Here, we pursue an alternative
direction: the Helmholtz operator can be preconditioned with a shifted
Helmholtz operator with $\delta$ replaced by
$\hat{\delta}>\delta$. The inverse of the shifted operator can then be
further approximated with one or more multigrid cycles.  If we choose
$\hat{\delta}=\mathcal{O}(k)$, then the shifted operator will be
suitable for multigrid, i.e. standard multigrid will give $k$-robust
convergence.  However, \cite{gander2015applying} showed that
$\hat{\delta}$ should be $\mathcal{O}(1)$ as $k\to\infty$ for
$k$-robust preconditioning of the original operator, i.e. in order
that the number of required GMRES iterations $n$ is independent of $k$
as $k\to \infty$.  This $k$-robustness is important because the cost
of GMRES is $\mathcal{O}(n^2)$, and restarting is not generally
robust.

In this article we describe how to use Hermitian Skew-Hermitian
Splitting (HSS) iteration \cite{bai2003hermitian} to bridge the gap
between these requirements {$\hat{\delta}=\mathcal{O}(k)$} and
$\hat{\delta}=\mathcal{O}(1)$, focussing on the case of finite element
discretisations (the basic idea can be easily adapted to other
methods). HSS is a two step stationary iterative method based on
splitting the operator into Hermitian and skew-Hermitian parts. After
eliminating down to one step, the skew-Hermitian part only appears in
its Cayley transform, which is a unitary operator that can be removed
from the upper bound for the contraction rate. There are a number of
extensions of HSS iteration in the literature, such as having
different coefficients for the two steps, replacing the Hermitian part
with other structures such as upper triangular matrices, adapting the
method to saddle point problems, and acceleration techniques \cite[for
  example]{bai2010modified,bai2007accelerated,wang2004skew,salkuyeh2015generalized,li2014lopsided}. \cite{li2024two}
considered an HSS-like splitting for the Helmholtz equation, obtaining
systems that are amenable to a discrete sine transformation (rather
than multigrid).

In this article, when $\delta > 0$ we formulate a PHSS (Preconditioned
Hermitian Skew-Hermitian Splitting) iteration
\cite{bertaccini2005preconditioned} that involves solving Helmholtz
problems with $\hat{\delta}=\mathcal{O}(k)$, which is sufficient for
robust approximate solution using multigrid. It is then
straightforward to show that $k$ HSS iterations produces an
$\mathcal{O}(1)$ reduction in the error, independent of mesh
resolution.  For $\delta=0$, we propose to precondition the indefinite
Helmholtz operator with the $\delta$-shifted operator with
$\hat{\delta}=\mathcal{O}(1)$, which in turn is approximated by some
chosen number of HSS ``inner'' iterations.  We show that if the number
of inner iterations is again chosen to be $\mathcal{O}(k)$, then the
outer iteration for the unshifted operator is $k$- and
mesh-robust. This proof relies on results from
\cite{gander2015applying} which use a different norm than the natural
one for the HSS computations, and some additional technicalities arise
from that. Although we focus on the $\delta=0$ problem, the method and
theoretical results are easily extendable to the case of small
positive $\delta$ (or spatially varying $\delta$ that becomes zero or
small in some regions).

When the HSS iterations for the $\hat{\delta}=\mathcal{O}(k)$ system
are approximated using multigrid, the mesh-robustness is maintained,
and the algorithm can be parallelised using standard Message Passing
Interface (MPI) parallelisation techniques implemented in standard
libraries such as PETSc \cite{petsc-user-ref}. Since the whole
iteration is built from standard components, both the smoothers and
the transfer operators have fixed stencil widths independent of $k$,
so in turn the halos in the MPI communications have fixed depth, and
we expect to see very good parallel scaling. In the range of perfect
weak scaling of the multigrid solves, given sufficient computational
cores, we argue that the algorithm should converge in $\mathcal{O}(k)$
wallclock time.  This is beneficial when the mesh size $h$ needs to go
to zero faster than $1/k$ as is required to overcome the pollution
effect \cite{BaylissA1985Oacf}. \cite{spence2025preconditioning}
proved that for the interior impedance problem, convergence of
numerical solutions is obtained when $hk^{3/2}$ is sufficiently small
(with $(hk)^{2p}k$ sufficiently small for degree $p$ finite elements
and suitably smooth boundary).

The rest of the article is organised as follows. In Section \ref{sec:formulations}
we establish notation for two formulations of the indefinite Helmholtz equation
and their finite element discretisation. In Section \ref{sec:HSS shifted},
we introduce the HSS iteration for the shifted ($\delta > 0$) Helmholtz equation,
proving $k$- and mesh-robustness. In Section \ref{sec:HSS 0}, we introduce
the shifted HSS iteration for the unshifted ($\delta=0$) Helmholtz equation,
proving $k$- and mesh-robustness. In Section \ref{sec:numerics} we provide
numerical results that verify these proofs, and demonstrate our claim of
$\mathcal{O}(k)$ wallclock solver time. Finally, in Section \ref{sec:summary}
we provide a summary and outlook.

\section{Primal and mixed formulations}
\label{sec:formulations}

In this section we briefly introduce the variational formulations that
we will address.  We consider two
formulations that lead to closely related iterative
strategies.

Starting from \eqref{eq:helmholtz strong primal}, the
``primal'' formulation is obtained by multiplying by a test function
and integrating by parts, and restricting solution and test function
to the subspace
\begin{equation}
  H^1_0(\Omega) = \{v\in H^1(\Omega):v=0 \mbox{ on }\partial\Omega_0\}.
\end{equation}
The primal formulation thus seeks $u\in H^1_0(\Omega)$, such that
\begin{equation}
  \label{eq:primal H1}
  b_\delta(u,v)
  = G_u[v], \quad \forall v\in H^1_0(\Omega),
\end{equation}
where
\begin{align}
  b_\delta(u,v) &=\int_\Omega (-\delta+ik)^2uv^* + \nabla u\cdot\nabla v^* \diff x
  - (-\delta+ik)\int_{\Gamma} uv^*\diff S, \\
  G_u[v] &= \int_\Omega fv^* \diff x,
\end{align}
and where $^*$ is the complex conjugate.
To form the finite element discretisation we can choose a finite
element space $Q_h\subset H^1_0(\Omega)$, i.e. a continuous finite
element space. To be concrete, we choose a triangular/tetrahedral
mesh with continuous piecewise polynomials of degree $p$, which we
denote $\text{CG}_p$, and select the subspace of functions that satisfy the
boundary condition on $\partial\Omega_0$. Then the finite element
discretisation becomes: find $u\in Q_h$, such that
\begin{equation}
  \label{eq:primal}
  b_\delta(u,v) = G[v], \quad \forall v\in Q_h.
\end{equation}

The ``mixed'' formulation we consider here proceeds from the first order system
\begin{equation}
  (\delta-ik)\sigma - \nabla u = 0,
  \quad -\nabla\cdot\sigma + (\delta-ik)u = \frac{f}{\delta-ik},
\end{equation}
with equivalent boundary conditions $u=0$ on $\partial\Omega_0$ and
$\sigma\cdot n = -u$ on $\Gamma$ (because
$\pp{u}{n}=-(\delta+ik)\sigma$). Our corresponding variational
formulation seeks $(\sigma,u)\in (L^2(\Omega))^d\times
H^1_0(\Omega)$ such that
\begin{align}
  a_\delta((v, \tau), (u,\sigma)) = F_\sigma[\tau] + F_u[v],
  \quad \forall (v,\tau) \in (L^2(\Omega))^d\times
H^1_0(\Omega),
\end{align}
where 
\begin{align} \nonumber
  a_\delta((v, \tau), (u,\sigma)) =& \int_\Omega (\delta-ik)\sigma\cdot \tau^* - \nabla u\cdot\tau^* \diff x \\
  & \quad + \int_\Omega \sigma\cdot \nabla v^* + (\delta-ik)uv^* \diff x + \int_\Gamma u v^* \diff S, \\
 F_u[v] =&  \int_\Omega \frac{fv^*}{\delta - ik} \diff x,\\
 F_\sigma[\tau]  =& 0,
 \end{align}
(Note that we could have
also chosen a formulation $(\sigma, u)\in H(\mbox{div};\Omega)\times
L^2(\Omega)$, but we leave this choice for future consideration.) To
define the mixed finite element discretisation, we need to select
$V_h\times Q_h\subset (L^2(\Omega))^d\times H^1_0(\Omega)$
appropriately. Here we choose $Q_h$ to be the $\text{CG}_p$ space as before
(selecting the subspace that satisfies the boundary conditions), and
we choose $V_h$ to be the vector function space with piecewise polynomials of degree $p-1$ for each entry, denoted by $(\text{DG}_{p-1})^d$.  This
choice gives the embedding property $u \in
Q_h\implies \nabla u\in V_h$. The mixed finite element discretisation
then seeks $(\sigma,u)\in V_h\times Q_h$ such that
\begin{equation}
  \label{eq:mixed}
    a_\delta((v, \tau),(u,\sigma)) = F_\sigma[\tau] + F_u[v],
  \quad \forall (v,\tau)\in V_h\times Q_h.
\end{equation}
The conditions for well posedness and quasi-optimality of \eqref{eq:primal H1} and the finite
element discretisation \eqref{eq:primal} are well established
\cite{melenk1995generalized,cummings2006sharp,hetmaniuk2007stability, SchatzAlfredH.1974AOCR},
and here we just assume throughout the article that they hold. To
establish well-posedness for the discrete mixed formulation
\eqref{eq:mixed}, we just note that the embedding property
means that we can choose $\tau = (\delta-ik)\sigma - \nabla u$,
meaning that $(\delta-ik)\sigma - \nabla u=0$ in $L^2$. Then we
can eliminate $\sigma$ to recover the primal formulation \eqref{eq:primal}, and
well posedness and quasi-optimality follow.

\section{HSS iteration for the shifted system}

\label{sec:HSS shifted}

In this section we define stationary iterative methods for the
shifted (\emph{i.e.} $\delta>0$) primal and mixed systems. The
stationary method can be used as a preconditioner, or a solver in its
own right.  We motivate this by formulating an HSS iteration using
operators on the finite element space, before reformulating as a
standard variational formulation that demonstrates how to implement
the iteration in practice, before analysing the convergence.

\subsection{Primal formulation}
First, we consider the primal shifted problem \eqref{eq:primal}.
Multiplication by $i$ gives
\begin{equation}
  \int_\Omega (i(\delta^2 - k^2)+2\delta k)uv^* + i\nabla u\cdot\nabla v^* \diff x
  + (\delta i+k)\int_{\Gamma} uv^*\diff S
  = i\int_\Omega fv^* \diff x, \quad \forall v \in V_h.
\end{equation}
Then, we define the skew-Hermitian operator $\Sprimal $ such that
\begin{equation}
  \langle v, \Sprimal u \rangle_{\Hprimal} = i(\delta^2 - k^2)\langle v, u\rangle
  + i\langle \nabla v, \nabla u\rangle
  + \delta i \llangle v, u\rrangle, \quad \forall u,v \in V_h,
\end{equation}
where we introduced the inner product $\langle \cdot,\cdot\rangle_{\Hprimal}$,
defined by 
\begin{equation}
  \langle v, u \rangle_{\Hprimal} = 2\delta k\langle v, u\rangle
  + k\llangle v, u \rrangle,
\end{equation}
where $\langle\cdot,\cdot\rangle$ denotes the usual $L^2(\Omega)$ inner product
for scalar complex valued functions defined  (with similar definition for vector fields) as
\begin{equation}
  \langle u, v \rangle_\Omega = \int_\Omega uv^*\diff x,
\end{equation}
and $\llangle \cdot,\cdot\rrangle$ denotes the $L^2(\partial\Omega)$
inner product for functions restricted to the boundary
$\partial\Omega$ of $\Omega$, defined similarly.

Our problem becomes
\begin{equation}
  (I + \Sprimal )u = \hat{f},
\end{equation}
where
\begin{equation}
  \langle v, \hat{f} \rangle_{\Hprimal} = i\langle v, f\rangle.
\end{equation}
Here, with respect to the $\Hprimal$ inner product, $I$ is Hermitian
and $\Sprimal$ is skew-Hermitian. Thus, we can write a two-step
stationary iterative method based on the Hermitian/skew-Hermitian
splitting (HSS) for this system as
\begin{equation}
  (\gamma + 1)u_{n+1/2} = (\gamma I - \Sprimal)u_n + f, \quad
  (\gamma I + \Sprimal)u_{n+1} = (\gamma - 1)u_{n+1/2} + f.
\end{equation}
with $\gamma > 0$. Eliminating $u_{n+1/2}$ gives
\begin{equation}
  \label{eq:HSS}
  (\gamma I + \Sprimal)u_{n+1} = \frac{\gamma - 1}{\gamma + 1}
  (\gamma I - \Sprimal)u_n + \frac{2\gamma}{\gamma + 1}f.
\end{equation}
The introduction of the ${\Hprimal}$ inner product is equivalent to
formulating a preconditioned HSS method
\cite{bertaccini2005preconditioned} using the Hermitian part of the
matrix as the preconditioner.

The optimal value of $\gamma$ for HSS iteration with the Hermitian
part being the identity is $\gamma=1$, which converges in 1 iteration,
but this is just equivalent to solving the original problem
\eqref{eq:primal}, and we have not gained anything. Instead,
in this article we choose $\gamma = k$. In each iteration, we have to
solve the shifted operator $kI + \Sprimal$ which is more suited to
multigrid than $I+\Sprimal$; this is more clearly seen by inspecting
the equivalent variational formulation below.
\begin{definition}[$\gamma=k$ HSS iteration in primal form]
  The $\gamma=k$ HSS iteration for the system \eqref{eq:primal}
  produces a sequence $u_0,u_1,\ldots$, defined
  iteratively for $n\geq0$ by finding $u_{n+1}\in V_h$ such that
  \label{def:primal hss}
  \begin{align}\label{eq:primal-hss-system}
    & \left(- 2\delta k^2i + \delta^2 - k^2 \right)\langle v, u_{n+1} \rangle
    + (-k^2i + \delta)\llangle v , u_{n+1} \rrangle
    + \langle \nabla v, \nabla u_{n+1}\rangle \nonumber
    \\
    & \quad = \frac{k-1}{k+1}\left((-2\delta k^2i - \delta^2 + k^2)\langle v, u_{n} \rangle
    + (-k^2 i - \delta)\llangle v , u_{n} \rrangle
    - \langle \nabla v, \nabla u_{n}\rangle\right) \nonumber \\
    & \quad + \frac{2k}{k+1}\langle v, f \rangle, \quad
    \forall v.
  \end{align}
\end{definition}
(Note that we have divided by $i$ again in
\eqref{eq:primal-hss-system}.)  Comparing \eqref{eq:primal} with
\eqref{eq:primal-hss-system}, we see that $(-\delta +
ik)^2=-k^2+\delta^2 -2i\delta k$ has been replaced by $-k^2-\delta^2
- 2i\delta k^2$. Considering $\delta$ fixed as $k$ gets large, this
shift is the minimum necessary scaling with $k$ for efficient
inversion by multigrid as discussed in \cite{cocquet2017large}. The
only difference in our present formulation is that the boundary term
is scaled differently. Presently, we have not analysed multigrid
applied to this modification but our numerical experiments demonstrate
that this shifting is sufficient to obtain $k$ independent multigrid
convergence.


Having understood the benefits of the choice $\gamma=k$, we can
analyse the convergence of our preconditioned HSS scheme \eqref{eq:primal-hss-system}.
In a scalable implementation, the iteration is solved
approximately using a multigrid preconditioned Krylov method. However,
here we concentrate on the case where the iteration is solved exactly.

\begin{proposition}[HSS convergence for the shifted primal system]
  \label{prop:hss primal}
  The primal preconditioned HSS scheme \eqref{eq:primal-hss-system} has the error contraction rate
  bound
  \begin{equation}\label{eq:primal-hss-contraction}
    \|e_{n+1}\|_{\Hprimal} \leq \frac{1-1/k}{1+1/k}\|e_n\|_{\Hprimal},
  \end{equation}
    where
  \begin{equation}
    \|e\|_{\Hprimal}^2 = \langle e, e \rangle_{\Hprimal}.
  \end{equation}
\end{proposition}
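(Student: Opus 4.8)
The plan is to mirror the proof of Proposition~\ref{prop:hss mixed} verbatim, with $(\Smixed,\Hmixed)$ replaced by $(\Sprimal,\Hprimal)$; the only genuine content is to re-establish the skew-Hermitian structure in the primal setting. First I would set $e_n = u_n - u_*$, where $u_*$ solves $(I+\Sprimal)u_* = \hat{f}$. A direct check shows $u_*$ is a fixed point of the iteration, since $(kI+\Sprimal)u_* - \frac{k-1}{k+1}(kI-\Sprimal)u_* = \frac{2k}{k+1}(I+\Sprimal)u_* = \frac{2k}{k+1}\hat{f}$; subtracting this identity from the iteration \eqref{eq:primal-hss-system} eliminates the forcing and yields the error recurrence $e_{n+1} = \frac{k-1}{k+1}(kI+\Sprimal)^{-1}(kI-\Sprimal)e_n$.

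The crux of the argument is that $\Sprimal$ is skew-Hermitian in the $\Hprimal$ inner product, i.e. $\langle \Sprimal v, u\rangle_{\Hprimal} = -\langle v, \Sprimal u\rangle_{\Hprimal}$ for all $u,v\in Q_h$. I would verify this straight from the defining identity: writing $\langle \Sprimal v, u\rangle_{\Hprimal} = \overline{\langle u, \Sprimal v\rangle_{\Hprimal}}$ and using $\overline{\langle a,b\rangle} = \langle b,a\rangle$ for each of the three Hermitian forms $\langle\cdot,\cdot\rangle$, $\langle\nabla\cdot,\nabla\cdot\rangle$ and $\llangle\cdot,\cdot\rrangle$, the purely imaginary prefactors $i(\delta^2-k^2)$, $i$ and $\delta i$ each change sign under conjugation, giving $\overline{\langle u, \Sprimal v\rangle_{\Hprimal}} = -\langle v, \Sprimal u\rangle_{\Hprimal}$. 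I would also note, exactly as in the mixed case, that $\langle\cdot,\cdot\rangle_{\Hprimal} = 2\delta k\langle\cdot,\cdot\rangle + k\llangle\cdot,\cdot\rrangle$ is a genuine (positive-definite) inner product precisely because $\delta>0$ renders the volume term positive definite, which is what legitimises using $\|\cdot\|_{\Hprimal}$ as a norm.

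With skew-Hermicity established the contraction is immediate. I would rewrite the iteration operator as $(kI+\Sprimal)^{-1}(kI-\Sprimal) = (I+\Sprimal/k)^{-1}(I-\Sprimal/k)$ and observe that $\Sprimal/k$ is skew-Hermitian in $\Hprimal$, so its Cayley transform is unitary in the $\Hprimal$ inner product and hence $\Hprimal$-norm preserving (invertibility of $I+\Sprimal/k$ is automatic, since the eigenvalues of $\Sprimal/k$ are purely imaginary). Taking $\Hprimal$ norms of the recurrence and pulling out the scalar then gives $\|e_{n+1}\|_{\Hprimal} = \frac{k-1}{k+1}\|e_n\|_{\Hprimal} = \frac{1-1/k}{1+1/k}\|e_n\|_{\Hprimal}$, which is the claimed bound (in fact an equality for the exactly-solved iteration).

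I do not expect a real obstacle, as the whole argument runs parallel to Proposition~\ref{prop:hss mixed}. The only step demanding care is the skew-Hermitian verification, where one must track the complex conjugation through the three imaginary prefactors and, in particular, confirm that the boundary contribution $\delta i\,\llangle v,u\rrangle$ transforms correctly under interchange of $u$ and $v$. This is exactly the place where the primal formulation's slightly different boundary scaling (relative to the mixed form) enters, but since the boundary form remains Hermitian and its coefficient remains imaginary, it does not alter the conclusion.
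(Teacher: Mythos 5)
Your proposal is correct and takes essentially the same route as the paper: the paper's proof of Proposition \ref{prop:hss primal} simply states that it is identical to the mixed case (Proposition \ref{prop:hss mixed}) after substituting $\Sprimal$ and $\langle\cdot,\cdot\rangle_{\Hprimal}$, which is exactly the argument you reproduce — error recurrence, skew-Hermicity of $\Sprimal$ in the $\Hprimal$ inner product, and norm preservation of the Cayley transform. Your explicit verification of the skew-Hermitian structure (tracking the purely imaginary prefactors through conjugation) and of the fixed-point identity fills in details the paper leaves implicit, but it is the same proof.
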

\begin{proof}
  (Follows standard HSS arguments.)
  Writing $e_n=u_n-u_*$, where $u_*$ is the exact solution,
  the error equation is
  \begin{equation}
    e_{n+1} = \frac{k-1}{k+1}(kI+\Sprimal)^{-1}(kI-\Sprimal)e_n.
  \end{equation}
  Then,
  \begin{equation}
    \|e_{n+1}\|_{\Hprimal} = \frac{1-1/k}{1+1/k}\|(I + \Sprimal/k)^{-1}(I - \Sprimal/k)e_n\|_{\Hprimal}
    \leq \frac{1-1/k}{1+1/k}\|e_n\|_{\Hprimal},
  \end{equation}
  since $(I+ \Sprimal/k)^{-1}(I- \Sprimal/k)$ is the Cayley
  transform of $\Sprimal/k$, which is skew Hermitian in the ${\Hprimal}$ norm, and
  hence is ${\Hprimal}$ norm preserving.
\end{proof}
\begin{corollary}
  \label{eq:hss est primal}
  If we take $m\ceil{k}$ iterations of the HSS iteration for $m>0$, then the
  error reduction satisfies
  \begin{equation}
    \|e_{m\ceil{k}}\|_{\Hprimal} \leq c\|e_0\|_{\Hprimal},
  \end{equation}
  for $0<c<1$ independent of $k$ for the primal formulation.
\end{corollary}
\begin{proof}
    The previous result gives
  \begin{equation}
    \|e_{m\ceil{k}}\|_{\Hprimal} \leq \left(\frac{1-1/k}{1+1/k}\right)^{m\ceil{k}}\|e_0\|_{\Hprimal}.
  \end{equation}
  Using the Bernoulli estimate $(1+x) \leq \exp(x)$, we have
  \begin{equation}
    0 < \left(\frac{1-1/k}{1+1/k}\right)^{m\ceil{k}}
    \leq     \left(1-\frac{1}{k}\right)^{m\ceil{k}}
    \leq     \left(1-\frac{1}{\ceil{k}}\right)^{m\ceil{k}}
    \leq \exp(-m) < 1, \\
  \end{equation}
  so we have a contraction rate $\exp(-m)$
  which is independent of $k$.
\end{proof}
We conclude that the HSS iteration will converge to a chosen tolerance
in the $\Hprimal$ norm in $\mathcal{O}(k)$ iterations.

As discussed above, the whole point of this framework is to replace the
exact solution of \eqref{eq:primal-hss-system} with an approximation solution
using multigrid with standard components, perhaps as a preconditioner
for a Krylov method. This will be described in more detail in Section
\ref{sec:numerics}.

\subsection{Mixed formulation}
Now, we consider the shifted  mixed system \eqref{eq:mixed},
We define the operator $\Smixed:V_h\times Q_h\to V_h\times Q_h$,
\begin{equation}
  \langle \Smixed(\sigma, u), (\tau, v) \rangle_{\Hmixed}
  = -ik\langle \tau, \sigma \rangle -ik \langle v, u\rangle
  + \langle \nabla v, \sigma \rangle - \langle \tau, \nabla u \rangle,
\end{equation}
via the $\Hmixed$ inner product,
\begin{equation}
  \langle (\sigma, u), (\tau, v)\rangle_{\Hmixed}
  = \delta\langle \sigma, \tau\rangle
  + \delta\langle u, v\rangle
  + \llangle u, v\rrangle,
\end{equation}
We note that
$\Smixed$ is skew-Hermitian in the ${\Hmixed}$ inner product. We also
note that the $\Hmixed$ inner product is only an inner product for
$\delta > 0$, which explains why we need to consider a shifted system.

Thus, our shifted system \eqref{eq:primal} can be written as
\begin{equation}
  {(I + \Smixed)}{(\sigma, u)} = f,
\end{equation}
where $f$ is the Riesz representer of $F$ in the ${\Hmixed}$ inner
product.  Again, the operator has the form $I+\Smixed$ with
$\Smixed$ skew-Hermitian. Proceeding as for the primal case, we have a
stationary method,
\begin{equation}
  \label{eq:HSSm}
  (\gamma I + \Smixed)U_{n+1} = \frac{\gamma - 1}{\gamma + 1}
  (\gamma I - \Smixed)U_n + \frac{2\gamma}{\gamma + 1}f.
\end{equation}
This is expressed in variational formulation in the following definition.
\begin{definition}[$\gamma=k$ HSS iteration in mixed form]
  The $\gamma=k$ HSS iteration for the system \eqref{eq:mixed}
  produces a sequence $(\sigma_0,u_0),(\sigma_1,u_1),\ldots$, defined
  iteratively for $n>=0$ by finding $(\sigma_{n+1},u_{n+1})$
  in $V_h\times Q_h$ such that
  \label{def:mixed hss}
\begin{align}
  \label{eq:HSS sigma} \nonumber
  k \langle \tau, (\delta - i)\sigma_{n+1} \rangle
  - \langle \tau, \nabla u_{n+1} \rangle
   & = & \frac{k-1}{k+1}\left(\langle \tau, k(\delta + i)\sigma_n \rangle
    + \langle \tau, \nabla u_n\rangle\right) \\
  & & \quad + \frac{2k}{k+1}F_\sigma[\tau], 
  \quad \forall \tau \in V_h, \\
    \label{eq:HSS u} \nonumber
  k\langle v, (\delta - i)u_{n+1} \rangle
  + \langle \nabla v, \sigma_{n+1} \rangle & \\
  \quad + {k}\llangle v, u_{n+1} \rrangle
  & = & \frac{k-1}{k+1}\left(k\langle v, (\delta + i)u_n \rangle
  - \langle \nabla v, \sigma_{n} \rangle
  + {k}\llangle v, u_n \rrangle
  \right) \nonumber \\
& & \qquad\qquad  + \frac{2k}{k+1}F_u[v],
  \quad \forall v \in Q_h.
\end{align}
\end{definition}
To solve this system, we make use of the embedding property to
deduce from \eqref{eq:HSS sigma} that
\begin{equation}
  \label{eq:strong sigma}
  k(\delta - i)\sigma_{n+1}
  - \nabla u_{n+1} = \frac{k-1}{k+1}k(\delta + i)\sigma_n
  + \nabla u_n = \frac{2k}{k+1}\sigma_0,
\end{equation}
in the $L^2$ sense,
where
\begin{equation}
  \langle \tau, \sigma_0 \rangle = F_\sigma[\tau], \quad
  \forall \tau \in V_h.
\end{equation}
Using \eqref{eq:strong sigma} to eliminate $\sigma_{n+1}$
from  \eqref{eq:HSS u} leads to a system of the form
\begin{equation}
  k^2(\delta - i)^2\langle v, u_{n+1} \rangle
  + \langle \nabla v, \nabla u_{n+1} \rangle
  + k^2(\delta - i)\llangle v, u_{n+1} \rrangle
  = \tilde{F}_u[v], \forall v \in V_h,
\end{equation}
which is a primal shifted Helmholtz system  with the imaginary shift
$-2i\delta k$ replaced by $-2i\delta k^2$.

This looks almost identical to our primal HSS system after eliminating
$\sigma_{n+1}$, except without $\sigma_n$ on the RHS, and with a
slightly different boundary term. We briefly summarise corresponding
results for the mixed system.

\begin{proposition}[HSS convergence for the shifted mixed system]
  \label{prop:hss mixed}
  The mixed preconditioned HSS scheme (\ref{eq:HSS sigma}-\ref{eq:HSS u})
  has the error contraction rate
  bound
  \begin{equation}
    \|e_{n+1}\|_{\Hmixed} \leq \frac{1-1/k}{1+1/k}\|e_n\|_{\Hmixed},
  \end{equation}
  where
  \begin{equation}
    \|e\|_{\Hmixed}^2 = \langle e, e \rangle_{\Hmixed}.
  \end{equation}
\end{proposition}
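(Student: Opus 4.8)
The plan is to work directly with the one-step operator form \eqref{eq:HSSm}, since the variational system (\ref{eq:HSS sigma}--\ref{eq:HSS u}) of Definition~\ref{def:mixed hss} is by construction just the Riesz representation of \eqref{eq:HSSm} in the $\Hmixed$ inner product with $\gamma=k$. First I would record that the exact solution $U=(\sigma,u)$ of $(I+\Smixed)U=f$ is a fixed point of \eqref{eq:HSSm}: substituting $U_n=U_{n+1}=U$ and clearing the factor $\gamma+1$ collapses the right-hand side back to $(I+\Smixed)U=f$, so $U$ indeed satisfies the recurrence. Subtracting this fixed-point identity from \eqref{eq:HSSm} eliminates the source term $f$ and yields the homogeneous error recurrence
\begin{equation}
  (\gamma I + \Smixed)e_{n+1} = \frac{\gamma-1}{\gamma+1}(\gamma I - \Smixed)e_n,
\end{equation}
where $e_n = U_n - U$. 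Because $\Smixed$ is skew-Hermitian in $\Hmixed$ its spectrum is purely imaginary, so $\gamma I + \Smixed$ is invertible for $\gamma=k>0$ and the recurrence may be written $e_{n+1} = \frac{\gamma-1}{\gamma+1}\,C\,e_n$ with the Cayley transform $C = (\gamma I + \Smixed)^{-1}(\gamma I - \Smixed)$.

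The crux of the argument is that $C$ is unitary in the $\Hmixed$ inner product; this is precisely the mechanism by which HSS removes the skew-Hermitian part from the contraction bound. Let $\Smixed^\dagger$ denote the $\Hmixed$-adjoint, so that $\Smixed^\dagger = -\Smixed$ and hence $(\gamma I \pm \Smixed)^\dagger = \gamma I \mp \Smixed$. Since $\gamma I + \Smixed$ and $\gamma I - \Smixed$ are both polynomials in $\Smixed$ they commute with each other and with their inverses, so taking $\Hmixed$-adjoints and reordering gives
\begin{equation}
  C^\dagger C = (\gamma I + \Smixed)(\gamma I - \Smixed)^{-1}(\gamma I + \Smixed)^{-1}(\gamma I - \Smixed) = I.
\end{equation}
Therefore $\|Ce_n\|_{\Hmixed} = \|e_n\|_{\Hmixed}$.

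Combining these two facts and taking the $\Hmixed$-norm of the error recurrence gives
\begin{equation}
  \|e_{n+1}\|_{\Hmixed} = \left|\frac{\gamma-1}{\gamma+1}\right|\,\|Ce_n\|_{\Hmixed} = \frac{\gamma-1}{\gamma+1}\,\|e_n\|_{\Hmixed},
\end{equation}
the last equality holding for $\gamma \geq 1$. Setting $\gamma=k$ and writing $\frac{k-1}{k+1}=\frac{1-1/k}{1+1/k}$ yields the claimed bound (with equality, in fact, rather than merely an inequality).

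I do not expect a serious obstacle: once the iteration operator is identified as a scalar multiple of a Cayley transform, the skew-Hermiticity of $\Smixed$ in the preconditioned inner product does all the work, and this is the standard one-step HSS estimate of \cite{bai2003hermitian} specialised to the Hermitian part being the identity. The only points requiring care are the bookkeeping that the variational iteration of Definition~\ref{def:mixed hss} is genuinely \eqref{eq:HSSm} in disguise (so that taking norms in $\Hmixed$ is legitimate), the invertibility of $\gamma I + \Smixed$ (immediate from the purely imaginary spectrum), and the adjoint manipulation establishing unitarity, all of which are routine.
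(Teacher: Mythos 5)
Your proof is correct and follows essentially the same route as the paper's: derive the homogeneous error recurrence, identify the iteration operator as $\frac{k-1}{k+1}$ times the Cayley transform of $\Smixed$ (up to scaling), and use its $\Hmixed$-unitarity, which the paper asserts and you verify explicitly via the adjoint computation. The extra details you supply (fixed-point check, invertibility of $kI+\Smixed$, and the observation that the bound actually holds with equality) are all sound refinements of the same argument.
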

\begin{proof}
  The proof is identical to the mixed case upon substitution of the
  definitions of $\langle\cdot,\cdot\rangle_{\Hmixed}$ and $\Smixed $
  for the primal case.
\end{proof}
\begin{corollary}
  \label{eq:hss est}
  If we take $m\ceil{k}$ iterations of the HSS iteration (\ref{eq:HSS
    sigma}-\ref{eq:HSS u}) for $m>0$, then the error reduction
  satisfies
  \begin{equation}
    \|e_{m\ceil{k}}\|_{\Hmixed} \leq c\|e_0\|_{\Hmixed},
  \end{equation}
  for $0<c<1$ independent of $k$ for the mixed formulation.
\end{corollary}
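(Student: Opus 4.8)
The plan is to apply Proposition~\ref{prop:hss mixed} repeatedly and track the accumulated contraction factor. Since each HSS iteration contracts the $\Hmixed$-norm of the error by at least the factor $\rho_k := \frac{1-1/k}{1+1/k}$, after $N$ iterations we have $\|e_N\|_{\Hmixed} \leq \rho_k^N \|e_0\|_{\Hmixed}$. Setting $N = m\ceil{k}$, the task reduces to showing that $\rho_k^{m\ceil{k}}$ is bounded above by some constant $c<1$ that does not depend on $k$.

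First I would rewrite the contraction factor as $\rho_k = 1 - \frac{2}{k+1}$ and take logarithms, so that $\rho_k^{m\ceil{k}} = \exp\bigl(m\ceil{k}\,\log(1 - \tfrac{2}{k+1})\bigr)$. The key analytic input is the elementary bound $\log(1-x) \leq -x$ for $x\in[0,1)$, applied with $x = \tfrac{2}{k+1}$, which gives
\begin{equation}
  m\ceil{k}\,\log\!\left(1 - \frac{2}{k+1}\right)
  \leq -\frac{2m\ceil{k}}{k+1}.
\end{equation}
Since $\ceil{k} \geq k$ and hence $\frac{\ceil{k}}{k+1} \geq \frac{k}{k+1} \geq \frac{1}{2}$ for all $k \geq 1$ (and more crudely $\frac{\ceil{k}}{k+1}$ is bounded below by a positive constant for $k$ bounded away from zero), the exponent is at most $-m$ times a positive constant. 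This yields a bound of the form $\rho_k^{m\ceil{k}} \leq e^{-cm'}$ for a fixed $c'>0$ independent of $k$, which is the desired constant $c<1$.

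The main thing to be careful about is the behaviour of $\frac{\ceil{k}}{k+1}$ as a function of $k$: I would verify that it is bounded below by a positive constant uniformly over the relevant range of $k$ (say $k\geq 1$, which is the regime of interest since we take $\gamma=k$ large). For small $k$ one should check no degeneracy occurs, but since the paper is concerned with the large-$k$ limit and $\ceil{k}\geq 1$ while $k+1$ stays bounded on any compact set, uniform positivity is immediate. I do not anticipate a genuine obstacle here; the result is essentially a packaging of the geometric decay from Proposition~\ref{prop:hss mixed} into a statement with an explicit $k$-independent constant, and the only subtlety is making the exponent bound uniform rather than merely asymptotic.
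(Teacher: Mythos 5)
Your proof is correct and takes essentially the same route as the paper: both iterate Proposition~\ref{prop:hss mixed} to get the factor $\left(\frac{1-1/k}{1+1/k}\right)^{m\ceil{k}}$ and then invoke the elementary exponential inequality $1+x\leq e^{x}$ (equivalently $\log(1-x)\leq -x$) to extract the $k$-independent constant $e^{-m}$. The only difference is bookkeeping: the paper first bounds $\frac{1-1/k}{1+1/k}\leq 1-\frac{1}{\ceil{k}}$ and applies the Bernoulli estimate, whereas you keep the exact form $1-\frac{2}{k+1}$ and use $\ceil{k}/(k+1)\geq \frac{1}{2}$; both land on the same bound.
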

\begin{proof}
  The proof is identical to that of Corollary \ref{eq:hss est}.
\end{proof}
We conclude that the HSS iteration will converge to a chosen tolerance
in the $\Hmixed$ norm in $\mathcal{O}(k)$ iterations.

\section{Shifted HSS iteration for the $\delta=0$ system}
\label{sec:HSS 0}

In this section we describe and analyse our proposed shifted HSS
method for the $\delta=0$ system. We build this in two steps. 
First, following the shift preconditioning
approach, the system with $\delta=0$ is ``shift preconditioned'' by
the system with some chosen $\hat{\delta} > 0$. If the operator for the
$\delta$-shifted system is denoted {$B_\delta$ for the primal formulation and $A_\delta$ for the mixed formulation}, this leads to an iterative
scheme of the form,
\begin{equation}
  B_{\hat{\delta}} u^{n+1} = (B_{\hat{\delta}}-B_0)u^n + R,
\end{equation}
{for the primal formulation and similarly for the mixed formulation.}
As discussed in the introduction, $\hat{\delta}=\mathcal{O}(1)$ is
necessary for $k$ independent convergence rate bounds, whilst
$\hat{\delta}=\mathcal{O}(k)$ is necessary for $k$-robust solution of
the shifted system by multigrid with standard components.  To
reconcile this dichotomy, we replace the inverse of the shifted
operator ({$B_{\hat\delta}$ or} $A_{\hat{\delta}}$) by $m\ceil{k}$ ``inner'' iterations of
Definition \ref{def:primal hss} (or \ref{def:mixed hss} as
appropriate) with initialisation at zero. This is stated more
precisely in the following definitions.
\begin{definition}[Shifted HSS iteration for the primal system]
  \label{def:primal shifted HSS}
  For the primal system $B_0(u)[v]=G[v]$ and given some chosen integer
  $m>0$, we define the shifted HSS preconditioner
  $\tilde{B}_{\hat{\delta}}^{-1}:V_h^*\to V_h$ as $\tilde{B}_{\hat{\delta}}^{-1}R=
  u_{m\ceil{k}}$ where
  \begin{align}
    \label{eq:HSSpc primal}
    & \left(- 2\hat{\delta} k^2i + \hat{\delta}^2 - k^2 \right)\langle v, u_{n+1} \rangle
+ (-k^2i + \hat{\delta})\llangle v , u_{n+1} \rrangle
+ \langle \nabla v, \nabla u_{n+1}\rangle \nonumber
 \\
 = & \frac{k-1}{k+1}\left((-2\hat{\delta} k^2i - \hat{\delta}^2 + k^2)\langle v, u_{n} \rangle
+ (-k^2 i - \hat{\delta})\llangle v , u_{n} \rrangle
- \langle \nabla v, \nabla u_{n}\rangle\right) \nonumber \\
& 
+ \frac{2k}{k+1}R[v], \quad
\forall v, \quad n=0,\ldots m\ceil{k}-1,
  \end{align}
  and $u_0=0$.
\end{definition}
Hence, one application of $\tilde{B}^{-1}_{\hat{\delta}}$ involves $m\ceil{k}$
inner iterations. As usual, the preconditioner $\tilde{B}_{\hat{\delta}}$ can be
used as a preconditioner for a Krylov method to solve the system $B_0u=G$, or
to define a stationary iteration $\tilde{B}_{\hat{\delta}}u^{n+1} =
(\tilde{B}_{\hat{\delta}} - B_0)u^n + G$. The latter can equivalently
be thought of as a Richardson iteration preconditioned by $\tilde{B}_{\hat{\delta}}$.

\begin{definition}[Shifted HSS preconditioner for the mixed system]
  \label{def:shifted HSS preconditioning mixed}
  For the mixed system $A_0(\sigma, u)[\tau,v]=F[\tau,v]$ and given
  some chosen integer $m>0$, we define the
  shifted HSS preconditioner $\tilde{A}_{\hat{\delta}}^{-1}:V_h^*\times Q_h^*\to V_h\times Q_h$ as $\tilde{A}_{\hat{\delta}}^{-1}R=
  (\sigma_{m\ceil{k}},u_{m\ceil{k}})$ where
  \begin{align}
    \label{eq:HSSpc u}
    \langle \tau, k(\hat{\delta} - i) \sigma_{n+1}\rangle
    - \langle \tau, \nabla u_{n+1} \rangle
    & = & \frac{k-1}{k+1}\left( \langle \tau, k(\hat{\delta} + i)\sigma_n \rangle
    + \langle \tau, \nabla u_n \rangle\right)
    \nonumber \\
    & &
    +
    \frac{2k}{k+1}R_\sigma[\tau],
    \quad \forall \tau \in V_h,
    \\ \nonumber
    \langle v, k(\hat{\delta} - i) u_{n+1}\rangle
    + \langle \nabla v, \sigma_{n+1} \rangle & & \\
    \quad + k\llangle v, u_{n+1}\rrangle
    & = & \nonumber
    \frac{k-1}{k+1}
    \left(
    \langle v, k(\hat{\delta} + i) u_{n}\rangle
    - \langle \nabla v, \sigma_{n} \rangle
    + k\llangle v, u_{n}\rrangle\right) \\
    & &
    + \frac{2k}{k+1}R_u[v], \forall  v\in Q_h, \quad n=0,\ldots,mk-1,
    \label{eq:HSSpc sigma}
  \end{align}
  and $(\sigma_0,u_0)=(0,0)$.
\end{definition}
We will show that the operators $\tilde{B}_{\hat{\delta}}^{-1}B_0 - I$
and $\tilde{A}_{\hat{\delta}}^{-1}A_0 - I$ have spectral radius bounds
that are independent of $k$ {and can be made less than 1}, leading to $k$ independent convergence
rate bounds for the primal and mixed iterative schemes, respectively.

Our results rely upon the following two theorems which we quote
without proof, concerning the problem of finding $u\in H^1(\Omega)$
such that
\begin{equation}
  -(\hat{k}^2+i\epsilon)
  \langle v, u\rangle + \langle \nabla v, \nabla u \rangle
  - i\eta\llangle v, u \rrangle = \langle v, f \rangle
  + \llangle v, g \rrangle, \quad \forall v\in H^1(\Omega).
  \label{eq:gander}
\end{equation}
\begin{theorem}[Theorem 2.9 of \cite{gander2015applying}]
  Let $u\in H^1(\Omega)$ solve \eqref{eq:gander}, in a domain $\Omega$
  that is Lipschitz and star-shaped with respect to a ball. If
  $\epsilon \lesssim \hat{k}$, $\Re(\eta) \sim \hat{k}$, $\Im(\eta) \lesssim \hat{k}$,
  then given $k_0>0$, there exists $C>0$ independent of $\hat{k}$, $\epsilon$
  and $\eta$ such that
  \begin{equation}
    \|u\|_{1,\hat{k},\Omega}^2 := |u|_{1,\Omega}^2 + \hat{k}^2\|u\|^2_{0,\Omega}
    \leq C\left(\|f\|_{0,\Omega}^2+ \|g\|_{0,\partial\Omega}^2\right),
  \end{equation}
  for all $\hat{k}\geq k_0$.
  \label{th:g20152.9}
\end{theorem}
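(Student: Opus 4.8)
The plan is to prove this as a wavenumber-explicit a priori bound by the Rellich--Morawetz multiplier method, the standard tool for such stability estimates for the impedance Helmholtz problem. I would split the argument into existence and uniqueness on the one hand and the quantitative bound on the other, spending essentially all the effort on the latter. Existence and uniqueness are comparatively soft: the sesquilinear form in \eqref{eq:gander} is a compact perturbation of a coercive one (the zeroth-order term $-(\hat{k}^2+i\epsilon)\langle v,u\rangle$ is compact by Rellich's embedding theorem and the boundary term $-i\eta\llangle v,u\rrangle$ by compactness of the trace $H^1(\Omega)\to L^2(\partial\Omega)$), so by the Fredholm alternative it suffices to prove uniqueness. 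Taking $f=g=0$ and testing with $v=u$, the imaginary part of the identity reads $\epsilon\|u\|_{0,\Omega}^2+\Re(\eta)\|u\|_{0,\partial\Omega}^2=0$; since $\Re(\eta)\sim\hat{k}>0$ this forces $u=0$ on $\partial\Omega$, whence $\partial_n u=i\eta u=0$ there, so both Cauchy data vanish and unique continuation for the Helmholtz operator gives $u\equiv0$.

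For the bound itself I would work with the strong form $-\Delta u-(\hat{k}^2+i\epsilon)u=f$, $\partial_n u-i\eta u=g$, and test with the Morawetz multiplier $Mu=x\cdot\nabla u+\frac{d-1}{2}u$, where $x$ is measured from the star-centre. Integrating $2\Re\int_\Omega\overline{Mu}\,(-\Delta u-\hat{k}^2 u)\,\diff x$ by parts twice yields the Rellich identity, whose bulk part is exactly the target quantity $|u|_{1,\Omega}^2+\hat{k}^2\|u\|_{0,\Omega}^2=\|u\|_{1,\hat{k},\Omega}^2$ (with this multiplier the coefficient of each of the two bulk terms is precisely $1$, independently of $d$), plus a collection of surface integrals. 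Substituting $-\Delta u-\hat{k}^2 u=f+i\epsilon u$ on the right, the $f$-contribution is controlled by Cauchy--Schwarz and Young using $\|Mu\|_{0,\Omega}\lesssim\|u\|_{1,\hat{k},\Omega}$, and the parasitic $i\epsilon u$-contribution, which is $\lesssim(\epsilon/\hat{k})\|u\|_{1,\hat{k},\Omega}^2$, is absorbed into the left-hand side precisely because of the hypothesis $\epsilon\lesssim\hat{k}$.

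The boundary analysis is where the geometry and the hypotheses on $\eta$ all enter, and it is the main obstacle. Decomposing $\nabla u$ into its normal and tangential parts on $\partial\Omega$ and inserting the impedance relation $\partial_n u=i\eta u+g$, the surface integrals organise into (i) a favourable term $-\int_{\partial\Omega}(x\cdot n)|\nabla_T u|^2$, (ii) terms of order $\hat{k}^2\|u\|_{0,\partial\Omega}^2$ coming from $\int_{\partial\Omega}(x\cdot n)(|\partial_n u|^2+\hat{k}^2|u|^2)$ after using $|\eta|\sim\hat{k}$, and (iii) cross terms involving the tangential gradient and the datum $g$. Here star-shapedness \emph{with respect to a ball} is used in its quantitative form $x\cdot n\geq c_0>0$: it lets the cross terms in (iii) (including the one containing $g$, for which $g\in L^2(\partial\Omega)$ is all that is available) be absorbed by Young's inequality into the good term (i), whose sign would otherwise merely be discarded. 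The dangerous $\mathcal{O}(\hat{k}^2)$ terms in (ii) cannot be bounded by the data directly; instead I would return to the imaginary part of the $v=u$ identity \emph{with} data present, which gives $\hat{k}\|u\|_{0,\partial\Omega}^2\lesssim\|u\|_{0,\Omega}\|f\|_{0,\Omega}+\|u\|_{0,\partial\Omega}\|g\|_{0,\partial\Omega}$ and hence, after Young, $\hat{k}^2\|u\|_{0,\partial\Omega}^2\lesssim\|f\|_{0,\Omega}^2+\|g\|_{0,\partial\Omega}^2+\|u\|_{1,\hat{k},\Omega}^2$. The residual multiple of $\|u\|_{1,\hat{k},\Omega}^2$ enters with a factor that is $\mathcal{O}(1/\hat{k})$, so it is absorbed into the coercive left-hand side once $\hat{k}$ is large enough; the conditions $\Re(\eta)\sim\hat{k}$ and $\Im(\eta)\lesssim\hat{k}$ are exactly the calibration that makes the boundary terms scale like $\hat{k}^2\|u\|_{0,\partial\Omega}^2$ and no worse.

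Collecting the estimates gives $\|u\|_{1,\hat{k},\Omega}^2\lesssim\|f\|_{0,\Omega}^2+\|g\|_{0,\partial\Omega}^2$ for $\hat{k}$ beyond an absorption threshold, with a constant independent of $\hat{k}$, $\epsilon$ and $\eta$; the remaining bounded range $k_0\leq\hat{k}\leq\text{threshold}$ is then covered by a continuity/compactness argument using the already-established uniqueness, yielding the stated bound for all $\hat{k}\geq k_0$. The genuinely delicate point throughout is that on a merely Lipschitz domain $u\notin H^2(\Omega)$ in general, so $Mu$ is not classically admissible and the boundary integrals of $|\nabla u|^2$ and of $\nabla_T u$ need justification; I would handle this by first proving the identity on smooth approximating domains (or for a regularisation of $u$) and passing to the limit, or by invoking the Lipschitz-domain form of the Rellich identity, as is done in \cite{gander2015applying}.
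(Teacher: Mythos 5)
There is no in-paper proof to compare against: the paper explicitly quotes this statement \emph{without proof} as Theorem 2.9 of \cite{gander2015applying}, using it purely as an external ingredient for the later perturbation estimates. Your sketch is nevertheless the correct and standard route to such wavenumber-explicit bounds, and it is essentially the argument of the cited reference itself: Fredholm plus unique continuation for well-posedness, the Morawetz--Ludwig multiplier $x\cdot\nabla u+\tfrac{d-1}{2}u$ giving bulk terms $|u|_{1,\Omega}^2+\hat{k}^2\|u\|_{0,\Omega}^2$ with unit coefficients, star-shapedness with respect to a ball entering quantitatively as $x\cdot n\geq c_0>0$ to absorb the tangential cross terms, the $\mathcal{O}(\hat{k}^2)$ boundary terms controlled by the imaginary part of the Green identity (which is where $\Re(\eta)\sim\hat{k}$, $\Im(\eta)\lesssim\hat{k}$ and $\epsilon\lesssim\hat{k}$ are used), and the Lipschitz-domain justification of the Rellich identity by approximation. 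So the proposal is sound in approach and outline; the only caveat is that it is a sketch, not a worked proof, but since the paper itself delegates this result to \cite{gander2015applying}, that is the appropriate level of detail here.
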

\begin{remark}
In this article we found it easier to work with $-(\delta - ik)^2$ instead
of $\hat{k}^2+i\epsilon$, hence we write $\hat{k} = \sqrt{k^2-\delta^2}$,
$\epsilon=2k\delta$. The relevant case for $\eta$ will be
$\eta = i\delta + k$. Hence, for $\delta$
independent of $k$ we have
\begin{equation}
  \|u\|_{1,{k},\Omega}^2 = |u|_{1,\Omega}^2 + {k}^2\|u\|^2_{0,\Omega}
  \lesssim \left(\|f\|_{0,\Omega}^2+ \|g\|_{0,\partial\Omega}^2\right),
\end{equation}
for all $k$ sufficiently large, where $a \lesssim b$ means that there exists a parameter independent
constant $C>0$ such that $a \lesssim Cb$.
\end{remark}
\begin{theorem}[Lemma 3.5 of \cite{gander2015applying}, adjusted to our
    parameters]
  \label{th:g20153.5}
  Let $u\in H^1(\Omega)$ solve \eqref{eq:gander}, and let $u_h\in Q_h$
  solve
  \begin{equation}
    \label{eq:u_h}
    \langle v, (\delta - ik)^2 u_h \rangle  + \langle \nabla v,
    \nabla u_h\rangle + \llangle v, (\delta - ik)u_h \rrangle
    = \langle v, f \rangle + \llangle v, g \rrangle,
    \quad \forall v \in Q_h.
  \end{equation}
  Under the assumptions of Theorem
  \ref{th:g20152.9}, if we also assume that $\Omega$ is a convex
  polygon\footnote{More general assumptions in Theorem
  \ref{th:g20152.9} are discussed in \cite{gander2015applying} but we
  limit that discussion here.}, $\delta \lesssim k$, $k \sim \sqrt{k^2 - \delta^2}$, and $\delta \lesssim \sqrt{k^2 - \delta^2}$, then given $k_0 > 0$ there exists $C_1,C_2>0$
  (independent of $h$,$k$, and $\delta$) such that
  \begin{equation}
    \|u_h-u\|_{1,k,\Omega} \leq{C_2}\inf_{v\in Q_h}\|u-v\|_{1,k,\Omega},
    \label{eq:u_h est}
  \end{equation}
  whenever $h\sqrt{k^2 -\delta^2}(k-\delta) \leq C_1$ and $\sqrt{k^2-\delta^2}\geq k_0$.
\end{theorem}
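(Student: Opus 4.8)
The plan is to prove \eqref{eq:u_h est} by the classical Schatz--Aubin--Nitsche duality argument for indefinite problems, carried out in the $\|\cdot\|_{1,k,\Omega}$ norm and with careful tracking of the dependence on $k$, $\delta$ and $\hat{k}=\sqrt{k^2-\delta^2}$. The starting observation is that \eqref{eq:gander} and \eqref{eq:u_h} share the same sesquilinear form $a(u,v)$ (the left-hand side of \eqref{eq:gander}, with $u$ the trial and $v$ the test function), so the method is conforming, $Q_h\subset H^1(\Omega)$, and Galerkin orthogonality $a(u-u_h,v_h)=0$ holds for all $v_h\in Q_h$. Two convention-independent facts are needed. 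First, a \emph{G\aa rding inequality}: testing with $w$ itself and taking the real part annihilates the imaginary volume and boundary contributions and, since $\Re(\delta-ik)=\delta\geq 0$ and $\hat{k}^2\leq k^2$, gives
\begin{equation}
  \Re a(w,w) = |w|_{1,\Omega}^2 - \hat{k}^2\|w\|_{0,\Omega}^2 + \delta\|w\|_{0,\partial\Omega}^2 \geq \|w\|_{1,k,\Omega}^2 - 2k^2\|w\|_{0,\Omega}^2 .
\end{equation}
Second, \emph{$k$-uniform continuity} $|a(u,v)|\lesssim \|u\|_{1,k,\Omega}\|v\|_{1,k,\Omega}$, where the boundary term is handled using $|\eta|\sim\hat{k}\sim k$, the multiplicative trace inequality $\|w\|_{0,\partial\Omega}^2\lesssim k^{-1}\|w\|_{1,k,\Omega}^2$, and the hypotheses $\epsilon\lesssim\hat{k}$, $\delta\lesssim k$.

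Writing $e=u-u_h$, I combine the G\aa rding inequality with Galerkin orthogonality (replacing one copy of $e$ in $a(e,e)$ by $e-v_h$) and continuity to obtain, for every $v_h\in Q_h$,
\begin{equation}
  \|e\|_{1,k,\Omega}^2 \lesssim \|e\|_{1,k,\Omega}\,\|u-v_h\|_{1,k,\Omega} + k^2\|e\|_{0,\Omega}^2 .
\end{equation}
The crux is to show that the term $k^2\|e\|_{0,\Omega}^2$ can be absorbed into the left-hand side under the stated mesh-resolution hypothesis. For this I introduce the adjoint problem: let $z\in H^1(\Omega)$ solve $a(w,z)=\langle e,w\rangle$ for all $w\in H^1(\Omega)$, so that choosing $w=e$ gives $\|e\|_{0,\Omega}^2=a(e,z)$. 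This adjoint problem is again of the form \eqref{eq:gander} with $\epsilon\mapsto-\epsilon$ and $\eta\mapsto\bar\eta$ and data $(f,g)=(e,0)$; the hypotheses of Theorem \ref{th:g20152.9} are invariant under these changes, so it yields the $k$-explicit a priori bound $\|z\|_{1,k,\Omega}\lesssim\|e\|_{0,\Omega}$. Because $\Omega$ is a convex polygon, elliptic regularity upgrades this to an $H^2$ bound: from the strong form $\nabla^2 z=-(\hat{k}^2-i\epsilon)z-e$ we get $|z|_{2,\Omega}\lesssim\|\nabla^2 z\|_{0,\Omega}\lesssim(\hat{k}^2+\epsilon)\|z\|_{0,\Omega}+\|e\|_{0,\Omega}\lesssim\hat{k}\|e\|_{0,\Omega}$, the last step using $\hat{k}\|z\|_{0,\Omega}\lesssim\|z\|_{1,k,\Omega}\lesssim\|e\|_{0,\Omega}$ and $\epsilon\lesssim\hat{k}^2$.

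Taking $z_h\in Q_h$ to be a standard interpolant of $z$ and using $|z-z_h|_{1,\Omega}\lesssim h|z|_{2,\Omega}$ and $\|z-z_h\|_{0,\Omega}\lesssim h^2|z|_{2,\Omega}$ gives $\|z-z_h\|_{1,k,\Omega}\lesssim h|z|_{2,\Omega}\lesssim h\hat{k}\|e\|_{0,\Omega}$ (valid since the resolution hypothesis forces $hk\lesssim 1$). Galerkin orthogonality and continuity then yield
\begin{equation}
  \|e\|_{0,\Omega}^2 = a(e,z-z_h) \lesssim \|e\|_{1,k,\Omega}\,\|z-z_h\|_{1,k,\Omega} \lesssim h\hat{k}\,\|e\|_{1,k,\Omega}\,\|e\|_{0,\Omega},
\end{equation}
so $\|e\|_{0,\Omega}\lesssim h\hat{k}\,\|e\|_{1,k,\Omega}$ and hence $k^2\|e\|_{0,\Omega}^2\lesssim (h\hat{k}k)^2\|e\|_{1,k,\Omega}^2$. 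Choosing the constant $C_1$ in the resolution hypothesis small enough that this contribution can be absorbed into the left-hand side of the G\aa rding estimate gives $\|e\|_{1,k,\Omega}\lesssim\|u-v_h\|_{1,k,\Omega}$, and taking the infimum over $v_h\in Q_h$ establishes \eqref{eq:u_h est}.

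I expect the main obstacle to be the duality step: securing the adjoint a priori and $H^2$-regularity bounds with genuinely $h$-, $k$- and $\delta$-independent constants. The a priori bound is exactly what Theorem \ref{th:g20152.9} provides, while the $H^2$ estimate is where convexity of the polygon (to exclude corner singularities) and the compatibility conditions $k\sim\hat{k}$, $\delta\lesssim\hat{k}$ are used, particularly in controlling the impedance boundary term. In the regime of interest, $\delta=\mathcal{O}(1)$ with $k\to\infty$, the bookkeeping above already produces the condition $h\hat{k}k\sim hk^2\lesssim 1$, which matches the stated threshold $h\sqrt{k^2-\delta^2}(k-\delta)\leq C_1$; obtaining the sharp factor $(k-\delta)$ for $\delta$ comparable to $k$ requires the more delicate constant-tracking of Lemma~3.5 of \cite{gander2015applying}, which is why we quote the result rather than reproduce that bookkeeping here.
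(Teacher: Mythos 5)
The first thing to note is that the paper does not prove this statement at all: it is one of two results explicitly ``quote[d] without proof'' from \cite{gander2015applying} (Lemma 3.5 there, translated via $\hat{k}=\sqrt{k^2-\delta^2}$, $\epsilon=2k\delta$, $\eta=i\delta+k$), so there is no internal proof to compare against. Your overall strategy --- G\aa rding inequality plus Galerkin orthogonality, then an Aubin--Nitsche duality argument to absorb $k^2\|e\|_{0,\Omega}^2$ under the mesh-resolution threshold --- is the correct one, and it is the same machinery that underlies the cited lemma; your bookkeeping of the continuity constant, the trace inequality, and the equivalence of your threshold $h\hat{k}k\lesssim 1$ with the stated $h\sqrt{k^2-\delta^2}(k-\delta)\leq C_1$ (which holds because $k\sim\hat{k}$ forces $k-\delta\sim k$) are all sound.

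However, as a standalone proof your sketch has a genuine gap at the $H^2$-regularity step. The displayed chain $|z|_{2,\Omega}\lesssim\|\nabla^2 z\|_{0,\Omega}\lesssim(\hat{k}^2+\epsilon)\|z\|_{0,\Omega}+\|e\|_{0,\Omega}$ is not valid as written: the convex-domain estimate $|z|_{2,\Omega}\lesssim\|\Delta z\|_{0,\Omega}$ holds for homogeneous Dirichlet or Neumann conditions, but your adjoint solution satisfies the impedance condition $\partial z/\partial n=i\bar{\eta}z$ with $|\eta|\sim k$, so the correct estimate carries an additional boundary term of the form $\|\partial z/\partial n\|_{H^{1/2}(\partial\Omega)}\lesssim k\|z\|_{1,\Omega}$ whose $k$-dependence must be tracked. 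It happens to yield the same final order $|z|_{2,\Omega}\lesssim k\|e\|_{0,\Omega}$, but establishing that is precisely the delicate content of the regularity lemma preceding Lemma 3.5 in \cite{gander2015applying}; it cannot be waved through with the pure-Laplacian bound. Since your closing paragraph then defers the remaining constant-tracking to the cited reference, your proposal is in effect a citation plus a sketch of standard machinery --- which is a defensible position, and indeed the one the paper itself takes --- but the one displayed estimate that attempts to go beyond the sketch is the step that fails as stated.
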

\begin{corollary}
  Let $u_h\in Q_h$ solve \eqref{eq:u_h}.
  Then, under the assumptions of Theorem \ref{th:g20153.5},
  \begin{equation}
    \label{eq:u_h coroll}
    \|u_h \|_{1,k,\Omega}^2 \lesssim \|f\|^2_{0,\Omega}
    + \|g\|^2_{0,\partial\Omega}.
  \end{equation}
\end{corollary}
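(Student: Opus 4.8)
The plan is to derive the discrete stability bound \eqref{eq:u_h coroll} by combining the continuous stability estimate of Theorem~\ref{th:g20152.9} (in the reformulated shape given in the remark) with the quasi-optimality bound of Theorem~\ref{th:g20153.5}, using nothing more than the triangle inequality together with the observation that $0\in Q_h$. First I would let $u\in H^1(\Omega)$ denote the continuous solution of \eqref{eq:gander} for the parameter identification $\hat{k}=\sqrt{k^2-\delta^2}$, $\epsilon = 2k\delta$, $\eta = i\delta + k$ fixed in the remark, so that $u_h\in Q_h$ solving \eqref{eq:u_h} is exactly its Galerkin approximation in the relevant norm. Since the hypotheses of Theorem~\ref{th:g20153.5} include $k\sim\sqrt{k^2-\delta^2}=\hat{k}$, the two weighted norms $\|\cdot\|_{1,k,\Omega}$ and $\|\cdot\|_{1,\hat{k},\Omega}$ are equivalent up to $k$-independent constants, and I would pass freely between them.

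The central step is to control $\|u_h\|_{1,k,\Omega}$ by $\|u\|_{1,k,\Omega}$. By the triangle inequality, $\|u_h\|_{1,k,\Omega}\le \|u_h-u\|_{1,k,\Omega}+\|u\|_{1,k,\Omega}$. Theorem~\ref{th:g20153.5} bounds the first term by $C_2\inf_{v\in Q_h}\|u-v\|_{1,k,\Omega}$, and since $Q_h$ is a linear space we may take $v=0$ in the infimum, giving $\|u_h-u\|_{1,k,\Omega}\le C_2\|u\|_{1,k,\Omega}$. Hence $\|u_h\|_{1,k,\Omega}\le (1+C_2)\|u\|_{1,k,\Omega}$, with $C_2$ independent of $h$, $k$, and $\delta$ by hypothesis.

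Finally I would invoke the continuous stability estimate: under the assumptions quoted, Theorem~\ref{th:g20152.9} in the form recorded in the remark gives $\|u\|_{1,k,\Omega}^2\lesssim \|f\|^2_{0,\Omega}+\|g\|^2_{0,\partial\Omega}$. Squaring the previous inequality and chaining the two bounds yields $\|u_h\|_{1,k,\Omega}^2\le (1+C_2)^2\|u\|_{1,k,\Omega}^2\lesssim \|f\|^2_{0,\Omega}+\|g\|^2_{0,\partial\Omega}$, which is precisely \eqref{eq:u_h coroll}.

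There is no genuine analytical difficulty here, so the work is essentially bookkeeping, and that is where I would expect the only real care to be needed. Specifically, one must check that the assumptions attached to Theorem~\ref{th:g20153.5} (convex polygon, $\delta\lesssim k$, $k\sim\sqrt{k^2-\delta^2}$, $\delta\lesssim\sqrt{k^2-\delta^2}$, and the resolution condition $h\sqrt{k^2-\delta^2}(k-\delta)\le C_1$) subsume those of Theorem~\ref{th:g20152.9}, so that both quoted results apply simultaneously and the resulting constants remain $k$-independent. The subtlety most worth flagging is consistency: the continuous problem \eqref{eq:gander} and the discrete problem \eqref{eq:u_h} must correspond to the same sesquilinear form for $u_h$ to be the Galerkin approximation of $u$ and for the quasi-optimality estimate of Theorem~\ref{th:g20153.5} to be applicable. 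This is exactly the identification carried out in the remark, and confirming it is what legitimises taking $v=0$ and closing the argument.
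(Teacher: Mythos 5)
Your proof is correct and is essentially identical to the paper's: both take $v=0$ in the quasi-optimality estimate \eqref{eq:u_h est}, use the triangle inequality to get $\|u_h\|_{1,k,\Omega}\lesssim\|u\|_{1,k,\Omega}$, and then apply the continuous stability bound of Theorem~\ref{th:g20152.9}. The extra bookkeeping you flag (norm equivalence under $k\sim\sqrt{k^2-\delta^2}$ and matching the sesquilinear forms) is exactly the identification the paper handles in its remark.
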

\begin{proof}
  Choosing $v=0$ in \eqref{eq:u_h est},
  \begin{equation}
    \|u_h\|_{1,k,\Omega}^2 \lesssim \|u\|_{1,k,\Omega}^2
    \lesssim \|f\|^2_{0,\Omega}
    + \|g\|^2_{0,\partial\Omega},
  \end{equation}
  by Theorem \ref{th:g20152.9} and the reverse triangle inequality.
\end{proof}
Continuing with our agenda to show {for the primal formulation} that
$\|\tilde{B}^{-1}_{\hat{\delta}} B_0-I\| \leq c$ for some $c<1$, the
strategy is to write
\begin{align}
  \tilde{B}_{\hat{\delta}}^{-1}B_0 - I &
  = \tilde{B}_{\hat{\delta}}^{-1}B_{\hat{\delta}} - I + \tilde{B}_{\hat{\delta}}^{-1}B_0
  - \tilde{B}_{\hat{\delta}}^{-1}B_{\hat{\delta}}, \\
  &= \left(\tilde{B}_{\hat{\delta}}^{-1}B_{\hat{\delta}} - I\right) + \tilde{B}_{\hat{\delta}}^{-1}
  B_{\hat{\delta}} B_{\hat{\delta}}^{-1}B_0
  - \tilde{B}_{\hat{\delta}}^{-1}B_{\hat{\delta}}, \\
  &= \left(\tilde{B}_{\hat{\delta}}^{-1}B_{\hat{\delta}} - I\right) + \tilde{B}_{\hat{\delta}}^{-1}
  B_{\hat{\delta}} \left(B_{\hat{\delta}}^{-1}B_0 - I\right).
\end{align}
Then taking some operator norm $\|\cdot\|$, we have
\begin{align}
  \|\tilde{B}_{\hat{\delta}}^{-1}B_0 - I\|
  \leq
  & \|\tilde{B}_{\hat{\delta}}^{-1}B_{\hat{\delta}} - I\| + \|\tilde{B}_{\hat{\delta}}^{-1}
  B_{\hat{\delta}}\| \|B_{\hat{\delta}}^{-1}B_0 - I\|, \\
   \leq & \|\tilde{B}_{\hat{\delta}}^{-1}B_{\hat{\delta}} - I\| +
  \left(\|\tilde{B}_{\hat{\delta}}^{-1}B_{\hat{\delta}} - I\| + 1\right)
  \|B_{\hat{\delta}}^{-1}B_0 - I\|.
\end{align}
Hence, we just need to be able to make $\|B_{\hat{\delta}}^{-1}B_0-I\|$ and
$\|\tilde{B}_{\hat{\delta}}^{-1}B_{\hat{\delta}} - I\|$ small enough for this to hold.
We can make $\|\tilde{B}_{\hat{\delta}}^{-1}B_{\hat{\delta}} - I\|_{\Hprimal}$ small by taking
$\mathcal{O}(k)$ HSS iterations to form $\tilde{B}_{\hat{\delta}}^{-1}$, as we
have already shown in the previous section. Now we want to find
a bound for $\|B_{\hat{\delta}}^{-1}B_0-I\|_{\Hprimal}$ which we can control by reducing
${\hat{\delta}}$ (independently of $k$, otherwise the object of having
a $k$ independent multigrid preconditioner is defeated).

\cite{gander2015applying} provided techniques for showing that
$\|\hat{B}_{\hat{\delta}}^{-1}\hat{B}_0-I\|\lesssim \hat{\delta}$, using the usual
operator 2-norm for matrices, which we need to adapt to produce an
equivalent estimate in the $\|\cdot\|_{\Hprimal}$ norm.

First we give convergence results for the primal system shifted HSS
preconditioner.
\begin{theorem}
  Under the assumptions of Theorem \ref{th:g20153.5},
  we have
  \begin{equation}
    \label{eq:shift est p}
    \|{B}_{\hat{\delta}}^{-1}B_0 - I \|_{\Hprimal} \leq c\hat{\delta}^{1/2},
  \end{equation}
  for some constant $c>0$ that is independent of $k$ and
  $\hat{\delta}$ for the primal formulation of the Helmholtz problem.
\end{theorem}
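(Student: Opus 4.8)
The plan is to mirror the proof of the mixed-case estimate \eqref{eq:shift est}, exploiting the fact that the primal setting is genuinely simpler. Fix $u_0\in Q_h$ and set $u=(B_{\hat\delta}^{-1}B_0-I)u_0$, so that $B_{\hat\delta}u=(B_0-B_{\hat\delta})u_0$. Subtracting the two sesquilinear forms, the gradient contributions cancel and one is left with a right-hand side supported only in the interior and on the boundary,
\begin{equation}
  (\hat\delta-ik)^2\langle u,v\rangle + \langle\nabla u,\nabla v\rangle + (\hat\delta-ik)\llangle u,v\rrangle = (2i\hat\delta k-\hat\delta^2)\langle u_0,v\rangle - \hat\delta\llangle u_0,v\rrangle, \quad\forall v\in Q_h.
\end{equation}
Crucially, because there is no auxiliary flux variable $\sigma_0$ in the primal formulation, there is no $\langle\nabla v,\cdot\rangle$ term on the right, so the construction of the auxiliary field $\phi$ used in the mixed proof is unnecessary here; this is the main structural simplification.

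First I would read this equation as an instance of the discrete shifted problem \eqref{eq:u_h} with shift $\hat\delta$ and data of size $\|f\|_{0,\Omega}\leq|2i\hat\delta k-\hat\delta^2|\,\|u_0\|_{0,\Omega}\lesssim\hat\delta k\|u_0\|_{0,\Omega}$ and $\|g\|_{0,\partial\Omega}=\hat\delta\|u_0\|_{0,\partial\Omega}$ (using $\hat\delta\lesssim k$), possibly after conjugating the whole equation to match the convention of \eqref{eq:u_h}. Applying \eqref{eq:u_h coroll} under the assumptions of Theorem \ref{th:g20153.5} then yields
\begin{equation}
  \|u\|_{1,k,\Omega}^2\lesssim\hat\delta^2k^2\|u_0\|_{0,\Omega}^2 + \hat\delta^2\|u_0\|_{0,\partial\Omega}^2,
\end{equation}
and in particular $\|u\|_{0,\Omega}^2\lesssim\hat\delta^2(\|u_0\|_{0,\Omega}^2+k^{-2}\|u_0\|_{0,\partial\Omega}^2)$. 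This already controls the interior part $2\hat\delta k\|u\|_{0,\Omega}^2$ of $\|u\|_{\Hprimal}^2$ by a multiple of $\hat\delta\|u_0\|_{\Hprimal}^2$, recalling that $\|u_0\|_{\Hprimal}^2=2\hat\delta k\|u_0\|_{0,\Omega}^2+k\|u_0\|_{0,\partial\Omega}^2$ and using $\hat\delta\lesssim k$.

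The remaining task is to control the boundary part $k\|u\|_{0,\partial\Omega}^2$, which the $\|\cdot\|_{1,k,\Omega}$ norm does not see. For this I would test with $v=u$ and take minus the imaginary part. Since $\Im(\hat\delta-ik)^2=-2\hat\delta k$ and $\Im(\hat\delta-ik)=-k$ while $|u|_{1,\Omega}^2$ is real, the left-hand side reproduces exactly $\|u\|_{\Hprimal}^2=2\hat\delta k\|u\|_{0,\Omega}^2+k\|u\|_{0,\partial\Omega}^2$, whereas the right-hand side is bounded in modulus by $\hat\delta k\|u_0\|_{0,\Omega}\|u\|_{0,\Omega}+\hat\delta\|u_0\|_{0,\partial\Omega}\|u\|_{0,\partial\Omega}$. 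I would then substitute the a priori bound on $\|u\|_{0,\Omega}$ into the first term, and apply Young's inequality to the second with a weight chosen so that $\tfrac{k}{2}\|u\|_{0,\partial\Omega}^2$ is absorbed into the left-hand side. Careful bookkeeping of the $k$- and $\hat\delta$-weights then shows every residual is $\lesssim\hat\delta\|u_0\|_{\Hprimal}^2$, giving $\|u\|_{\Hprimal}^2\lesssim\hat\delta\|u_0\|_{\Hprimal}^2$ and hence $\|B_{\hat\delta}^{-1}B_0-I\|_{\Hprimal}\leq c\hat\delta^{1/2}$ after taking the supremum over $u_0$.

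The main obstacle is extracting the genuine $\hat\delta^{1/2}$ decay rather than a mere $\mathcal{O}(1)$ bound. The imaginary-part identity on its own only gives $\|u\|_{\Hprimal}^2\lesssim\|u_0\|_{\Hprimal}^2$, because a direct Young split of $\hat\delta k\|u_0\|_{0,\Omega}\|u\|_{0,\Omega}$ against $2\hat\delta k\|u\|_{0,\Omega}^2$ leaves a residual $\sim\hat\delta k\|u_0\|_{0,\Omega}^2$ that is an $\mathcal{O}(1)$ fraction of $\|u_0\|_{\Hprimal}^2$. The smallness must instead be imported from the a priori estimate, which is where the $\hat\delta$ factor actually resides; the delicate point is therefore to route the interior norm $\|u\|_{0,\Omega}$ through that estimate \emph{before} invoking Young's inequality, and to check that the boundary weighting $k$ (rather than $2\hat\delta k$) appearing in $\|\cdot\|_{\Hprimal}$ is precisely what forces the absorbed boundary residual to scale like $\hat\delta\|u_0\|_{\Hprimal}^2$.
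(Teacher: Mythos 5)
Your proposal is correct and follows essentially the same route as the paper's proof: the same error equation for $u=(B_{\hat\delta}^{-1}B_0-I)u_0$, the same application of the discrete stability estimate \eqref{eq:u_h coroll} to bound $\|u\|_{1,k,\Omega}$ (and hence the interior part of the $\Hprimal$ norm), and the same $v=u$, negative-imaginary-part argument with Young absorption of $\tfrac{k}{2}\|u\|_{0,\partial\Omega}^2$ to control the boundary term. If anything, you are slightly more careful than the paper, which drops the $\hat\delta\llangle u_0,u\rrangle$ contribution when taking imaginary parts, whereas you retain and absorb it.
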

\begin{proof}
For $u_0\in Q_h$,
  we define $u=(B_{\hat{\delta}}^{-1}B_0 - I)u_0$,
  such that
\begin{align}
    \label{eq:sigma shift primal}
    (-\hat{\delta} + ik)^2\langle v,u\rangle + \langle \nabla v, \nabla u \rangle -(-\hat{\delta} +ik) \llangle v, u \rrangle =& -\hat{\delta}^2 \langle v, u_0 \rangle + 2\hat{\delta} ki \langle v, u_0 \rangle \nonumber \\
    & - \hat{\delta} \llangle v, u_0 \rrangle,
  \quad \forall v \in Q_h.
\end{align}
Recall that
\begin{equation}
    \|u \|^2_{{\Hprimal}} := 2\hat{\delta} k \|u \|^2_{0,\Omega} + k\|u \|^2_{0,\partial \Omega}.
\end{equation}
We will derive bounds for $\|u \|^2_{0,\Omega}$ and $\|u \|^2_{0,\partial\Omega}$. To derive the bound for $\|u \|^2_{0,\Omega}$ we apply \eqref{eq:u_h coroll} to obtain
\begin{equation}
    \|u \|^2_{1,k,\Omega} \lesssim \hat{\delta}^2k^2\|u_0 \|^2_{0,\Omega} + \hat{\delta}^2 \| u_0\|^2_{0,\partial \Omega},
\end{equation}
assuming that $\hat{\delta} \leq k$. Now we multiply this expression by $\frac{\hat{\delta}}{k}$ to obtain
\begin{align}
    \hat{\delta} k \|u \|^2_{0,\Omega} &\lesssim \hat{\delta}^2 \left (\hat{\delta} k \|u_0 \|^2_{0,\Omega} + \frac{\hat{\delta}}{k}\|u_0 \|^2_{0,\partial\Omega}\right ), \\
    &\lesssim \hat{\delta}^2 \left (\hat{\delta} k \|u_0 \|^2_{0,\Omega} + \frac{\hat{\delta}^2}{k^3}\|u_0 \|^2_{0,\partial \Omega} + k \|u_0 \|^2_{0,\partial \Omega}\right ), \\
    &\lesssim \hat{\delta}^2 \left ( \hat{\delta} k \|u_0 \|^2_{0,\Omega} + k \|u_0 \|^2_{0,\partial \Omega}\right ), \\
    &\lesssim \hat{\delta}^2 \|u_0 \|^2_{{\Hprimal}},
\end{align}
where we have used Young's inequality in the second line. To derive the bound for $\|u \|^2_{0,\partial\Omega}$, we substitute $v=u$ in \eqref{eq:sigma shift primal} and take negative imaginary parts
\begin{align}
    2\hat{\delta} k \|u \|^2_{0,\Omega} + k\|u \|^2_{0,\partial\Omega} &= -2\hat{\delta} k\langle u,u_0 \rangle, \\
    &\leq 2\hat{\delta} k \|u_0 \|_{0,\Omega} \|u \|_{0,\Omega}, \\
    &\leq k\hat{\delta}^2 \|u_0 \|^2_{0,\Omega} + k \|u \|^2_{0,\Omega},
\end{align}
where we have used Schwarz's and Young's inequality in the second and third line respectively. Thus, using the bound for $\|u \|^2_{0,\Omega}$ we obtain
\begin{align}
    k \|u \|^2_{0,\partial \Omega} &\lesssim k \hat{\delta}^2 \|u_0 \|^2_{0,\Omega} + \hat{\delta} \|u_0 \|^2_{{\Hprimal}}, \\
    &\lesssim \hat{\delta} \|u_0 \|^2_{{\Hprimal}} + \hat{\delta} \|u_0 \|^2_{{\Hprimal}} \lesssim \hat{\delta} \|u_0 \|^2_{{\Hprimal}}.
\end{align}
So
\begin{align}
    \|u \|^2_{{\Hprimal}} &:= 2\hat{\delta} k \|u \|^2_{0,\Omega} + k\|u \|^2_{0,\partial \Omega}, \\
    &\lesssim \hat{\delta}^2 \|u_0 \|^2_{{\Hprimal}} + \hat{\delta} \|u_0 \|^2_{{\Hprimal}} \lesssim \hat{\delta} \|u_0 \|^2_{{\Hprimal}}.
\end{align}
Finally we may write
\begin{equation}
  \|B_{\hat{\delta}}^{-1}B_0 - I\|_{\Hprimal} = \sup_{0\neq u_0\in Q_h}
  \frac{\|(B_{\hat{\delta}}^{-1}B_0 - I)u_0\|_{\Hprimal}}{\|u_0\|_{\Hprimal}}
  =
  \sup_{0\neq u_0 Q_h}\frac{\|u\|_{\Hprimal}}{\|u_0\|_{\Hprimal}}
  \leq C\hat{\delta}^{1/2},
\end{equation}
for some constant $C>0$ independent of $k$ and $\hat{\delta}$. Hence, if we
choose $\hat{\delta}$ sufficiently small, we have $C\hat{\delta}^{1/2} < 1$, as
required.
\end{proof}

\begin{corollary}
  \label{cor:shifted HSS preconditioning primal}
  Under the assumptions of Theorem \ref{th:g20153.5},
  and for sufficiently large $m$ and $\hat{\delta}$ sufficiently small,
  there exists a constant $c<1$ independent of $k$, such that
  \begin{equation}
    \|\tilde{B}_{\hat{\delta}}^{-1}B_0 - I \|_{\Hprimal} \leq c,
  \end{equation}
  which implies that the preconditioned Richardson iteration converges
  at a rate independent of $k$.
\end{corollary}
\begin{proof}
    \begin{align} \nonumber
    \|\tilde{B}_{\hat{\delta}}^{-1}B_0 - I \|_{\Hprimal}
    \leq &\|\tilde{B}_{\hat{\delta}}^{-1}B_{\hat{\delta}} - I\|_{\Hprimal} \\
    & \quad + \left(\|\tilde{B}_{\hat{\delta}}^{-1}B_{\hat{\delta}} - I\|_{\Hprimal}+1\right)
    \|{B}_{\hat{\delta}}^{-1}B_0 - I\|_{\Hprimal}, \\
    \leq & e^{-m} + (e^{-m} + 1)C\hat{\delta}^{1/2},
  \end{align}
  having made use of Corollary \ref{eq:hss est} and \eqref{eq:shift est}. We
  can make this upper bound arbitrarily small by selecting large
  enough $m$ and small enough $\hat{\delta}$.
\end{proof}
\begin{remark}
  A consequence of this result is that the spectral radius of
  $\tilde{B}^{-1}_{\hat{\delta}} B_0-I$ is strictly less than 1,
  independent of $k$.  From \cite{kirby2010functional}, we know that
  the spectral radius of
  $\hat{\tilde{B}}^{-1}_{\hat{\delta}}\hat{B}_0-I$ is strictly less
  than 1, where $\hat{\tilde{B}}^{-1}_{\hat{\delta}}$ and $\hat{B}_0$
  are the matrices obtained by inserting basis functions into
  $\tilde{B}^{-1}_{\hat{\delta}}$ and $B_0$ respectively. This means
  that the decay rate of the classical GMRES upper bound estimate as a
  function of iteration is independent of $k$. However, our
  preconditioned operator is non-normal (due to the Sommerfeld
  condition), and we do not have a $k$ independent bound for the
  conditioning of the eigenmodes of
  $\hat{\tilde{B}}^{-1}_{\hat{\delta}}\hat{B}_0-I$, so this
  $k$-independent decay may not necessarily be useful. This is
  illustrated by the result of \cite{greenbaum1996any} showing that
  for any chosen nonincreasing GMRES convergence history, there exists
  a linear system that will exhibit it. We explore this aspect in our
  numerical experiments section.
\end{remark}
Now we extend these results to the mixed formulation.
\begin{theorem}
  Under the assumptions of Theorem \ref{th:g20153.5},
  we have
  \begin{equation}
    \label{eq:shift est}
    \|{A}_{{\hat{\delta}}}^{-1}A_0 - I \|_{\Hmixed} \leq c\hat{\delta}^{1/2},
  \end{equation}
  for some constant $c>0$ that is independent of $k$ and
  $\hat{\delta}$ for the mixed formulation of the Helmholtz problem.
\end{theorem}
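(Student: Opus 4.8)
The plan is to recast the bound as an energy estimate in the $\Hmixed$ inner product, so that the boundary contribution to $\|\cdot\|_{\Hmixed}$ is controlled automatically by coercivity rather than through a lossy trace inequality. Write $U=(\sigma,u)$ and $W=(\sigma_W,u_W):=(A_{\hat{\delta}}^{-1}A_0-I)U$, and use the identity $A_{\hat{\delta}}^{-1}A_0-I=-A_{\hat{\delta}}^{-1}(A_{\hat{\delta}}-A_0)$, so that $A_{\hat{\delta}}W=-(A_{\hat{\delta}}-A_0)U$. The only $\hat{\delta}$-dependent terms in $a_{\hat{\delta}}$ are the two interior mass terms, whence $(A_{\hat{\delta}}-A_0)U$ is the functional $(\tau,v)\mapsto\hat{\delta}\langle\sigma,\tau\rangle+\hat{\delta}\langle u,v\rangle$, which carries no boundary part.

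First I would use that $A_{\hat{\delta}}=I+\Smixed$ with $\Smixed$ skew-Hermitian in the $\Hmixed$ inner product (Section \ref{sec:HSS shifted}), so that $A_{\hat{\delta}}$ is coercive: pairing the residual equation $A_{\hat{\delta}}W=-(A_{\hat{\delta}}-A_0)U$ with $W$ in the $\Hmixed$ inner product and discarding the purely imaginary $\Smixed$ contribution gives $\|W\|_{\Hmixed}^2=-\Re(\hat{\delta}\langle\sigma,\sigma_W\rangle+\hat{\delta}\langle u,u_W\rangle)$, which by Cauchy--Schwarz is at most $\hat{\delta}\|\sigma\|_{0,\Omega}\|\sigma_W\|_{0,\Omega}+\hat{\delta}\|u\|_{0,\Omega}\|u_W\|_{0,\Omega}$. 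Crucially the right-hand side contains only interior $L^2$ norms of $W$, so the boundary part of $\|W\|_{\Hmixed}$ is absorbed on the left and never has to be estimated directly; this is exactly what avoids the spurious factor of $k$ that a direct trace estimate of the boundary term would produce.

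It then remains to show $\|\sigma_W\|_{0,\Omega},\|u_W\|_{0,\Omega}\lesssim\hat{\delta}k^{-1}(\|\sigma\|_{0,\Omega}+\|u\|_{0,\Omega})$. I would reduce $W$ to the primal form exactly as in the well-posedness argument of Section \ref{sec:formulations}: testing the $\sigma$-equation against all $\tau\in V_h$ and using the embedding property gives $(\hat{\delta}-ik)\sigma_W-\nabla u_W=-\hat{\delta}\sigma$ in $L^2$, and eliminating $\sigma_W$ shows $u_W$ solves a primal shifted Helmholtz problem of the form \eqref{eq:u_h} (with $\delta=\hat{\delta}$) whose right-hand side is the functional $v\mapsto-\hat{\delta}(\hat{\delta}-ik)\langle u,v\rangle+\hat{\delta}\langle\sigma,\nabla v\rangle$. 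Measuring this functional in the norm dual to $\|\cdot\|_{1,k,\Omega}$, the estimate $|\langle u,v\rangle|\le\|u\|_{0,\Omega}\|v\|_{0,\Omega}\le k^{-1}\|u\|_{0,\Omega}\|v\|_{1,k,\Omega}$ cancels the $\mathcal{O}(k)$ factor $(\hat{\delta}-ik)$, while $|\langle\sigma,\nabla v\rangle|\le\|\sigma\|_{0,\Omega}\|v\|_{1,k,\Omega}$, so the dual norm is $\lesssim\hat{\delta}(\|u\|_{0,\Omega}+\|\sigma\|_{0,\Omega})$. The $k$-uniform discrete inf--sup stability underlying Theorem \ref{th:g20153.5} then gives $\|u_W\|_{1,k,\Omega}\lesssim\hat{\delta}(\|u\|_{0,\Omega}+\|\sigma\|_{0,\Omega})$; since $\|u_W\|_{0,\Omega}\le k^{-1}\|u_W\|_{1,k,\Omega}$ and $\|\sigma_W\|_{0,\Omega}\le k^{-1}(|u_W|_{1,\Omega}+\hat{\delta}\|\sigma\|_{0,\Omega})$, the claimed $\mathcal{O}(\hat{\delta}k^{-1})$ bounds follow. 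Substituting into the coercivity estimate gives $\|W\|_{\Hmixed}^2\lesssim\hat{\delta}^2k^{-1}(\|\sigma\|_{0,\Omega}+\|u\|_{0,\Omega})^2$, and bounding the interior norms by $\|\sigma\|_{0,\Omega}^2+\|u\|_{0,\Omega}^2\le\hat{\delta}^{-1}\|U\|_{\Hmixed}^2$ yields $\|W\|_{\Hmixed}^2\lesssim\hat{\delta}\|U\|_{\Hmixed}^2$, which is \eqref{eq:shift est} after taking square roots.

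The main obstacle is the third step. Theorem \ref{th:g20152.9} is stated only for data of the form $\langle v,f\rangle+\llangle v,g\rrangle$, so to accommodate the gradient-data term $\hat{\delta}\langle\sigma,\nabla v\rangle$ the a priori bound must be recast as a dual-norm (inf--sup) statement in $\|\cdot\|_{1,k,\Omega}$, which in turn has to be extracted from the quasi-optimality result Theorem \ref{th:g20153.5} with a $k$-uniform constant; keeping the $k$-bookkeeping honest, in particular checking that the pairing $\langle u,v\rangle$ recovers the $k^{-1}$ needed to absorb the $(\hat{\delta}-ik)$ factor, is the delicate part. Finally, the reason the exponent degrades from Gander's $\hat{\delta}$ in the matrix $2$-norm to $\hat{\delta}^{1/2}$ in the $\Hmixed$ norm is precisely the $\hat{\delta}$-weighting of the interior $L^2$ terms in the $\Hmixed$ inner product: the perturbation $A_{\hat{\delta}}-A_0$ couples to $U$ only through those interior norms, whose $\Hmixed$-control costs a factor $\hat{\delta}^{-1/2}$, turning the component-level gain of $\hat{\delta}$ into $\hat{\delta}^{1/2}$.
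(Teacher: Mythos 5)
Your opening reduction and the energy identity are correct, and they constitute a genuinely cleaner device than the corresponding part of the paper's proof: the paper never pairs the defect equation with $W$ in the $\Hmixed$ inner product, but instead estimates $\|u\|_{0,\partial\Omega}$ by testing the eliminated primal equation with $\hat{u}$ and taking imaginary parts, and $\|\sigma\|_{0,\Omega}$ by testing with $\tau=\sigma$. Your observation that skew-Hermiticity of $\Smixed$ gives $\|W\|_{\Hmixed}^2=-\hat{\delta}\,\Re\left(\langle \sigma,\sigma_W\rangle+\langle u,u_W\rangle\right)$, so that the boundary part of $\|W\|_{\Hmixed}$ never needs to be estimated, is sound, as is your identification of the perturbation $(A_{\hat{\delta}}-A_0)U$ and the dual-norm computation for the eliminated right-hand side.

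The gap is in your third step, and it is exactly the difficulty the paper's proof is engineered to avoid. After eliminating $\sigma_W$ you face the gradient datum $\hat{\delta}\langle \nabla v,\sigma\rangle$, and the only stability results quoted (Theorem \ref{th:g20152.9} and the corollary \eqref{eq:u_h coroll}) apply to data of the form $\langle v,f\rangle + \llangle v,g\rrangle$ only; this is precisely why the paper says the presence of $\nabla v$ ``prevents'' using \eqref{eq:u_h coroll}. You substitute ``the $k$-uniform discrete inf--sup stability underlying Theorem \ref{th:g20153.5}'', but no such statement is quoted, it does not follow formally from quasi-optimality (which presupposes a continuous solution estimate, itself only quoted for $L^2(\Omega)\times L^2(\partial\Omega)$ data), and as stated it is false: the inf--sup constant of the Helmholtz form in the $\|\cdot\|_{1,k,\Omega}$ norm is $\Theta(1/k)$, not $k$-uniform. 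Two repairs exist and neither is free. The elementary one --- the shifted form satisfies $|b(u,u)|\gtrsim (\hat{\delta}/k)\|u\|_{1,k,\Omega}^2$, a coercivity bound inherited by every subspace --- gives an inf--sup constant $\hat{\delta}/k$, which loses a full factor of $\hat{\delta}$; running it through your own bookkeeping yields only $\|W\|_{\Hmixed}\lesssim \|U\|_{\Hmixed}$, not \eqref{eq:shift est}. The adequate one --- a discrete inf--sup constant $\gtrsim 1/k$ \emph{uniformly in} $\hat{\delta}$ under the mesh condition of Theorem \ref{th:g20153.5} --- is a Melenk--Sauter/Schatz-type duality result that would have to be imported or proved from scratch; with it your argument does close, since the factor $k$ lost relative to your claimed intermediate bounds cancels, still giving $\|W\|_{\Hmixed}^2\lesssim \hat{\delta}^2\cdot\hat{\delta}^{-1}\|U\|_{\Hmixed}^2$. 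The paper instead sidesteps any inf--sup statement with the auxiliary construction your proposal is missing: it defines $\phi\in Q_h$ by the coercive problem $k^2\langle\phi,v\rangle + k^2\llangle\phi,v\rrangle + \langle\nabla\phi,\nabla v\rangle = \hat{\delta}\langle\nabla v,\sigma\rangle$ (your notation), bounds $\phi$ by a direct energy estimate, and applies \eqref{eq:u_h coroll} to $\hat{u}=u_W-\phi$, whose data are purely of $L^2$ type. Splicing that $\phi$-splitting into your third step, to bound $\|u_W\|_{0,\Omega}$ and $\|\sigma_W\|_{0,\Omega}$, would make your proof complete --- and arguably shorter than the original, thanks to your energy identity.
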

\begin{proof}
  For $(\sigma_0, u_0)\in V_h\times Q_h$,
  we define $(\sigma,u)=(A_{\hat{\delta}}^{-1}A_0 - I)(\sigma_0,u_0)$,
  such that
  \begin{align}
    \label{eq:sigma shift}
  \langle \tau, (\hat{\delta} - ik) \sigma \rangle
  - \langle \tau, \nabla u \rangle & = -\hat{\delta}
  \langle \tau, \sigma_0 \rangle, \\
  \langle v, (\hat{\delta} - ik)u \rangle
  + \llangle v, u \rrangle + \langle \nabla v, \sigma \rangle
  & = -\hat{\delta} \langle v, u_0 \rangle.
\end{align}
Eliminating $\sigma$ as previously, we get
\begin{equation}
  \langle v, (\hat{\delta} - ik)^2 u \rangle
  + \llangle v, (\hat{\delta} - ik)u\rrangle
  + \langle \nabla v, \nabla u \rangle
  = -\hat{\delta} \langle v, (\hat{\delta} - ik) u_0 \rangle
  + \hat{\delta} \langle \nabla v, \sigma_0 \rangle,
  \quad \forall v \in Q_h.
  \label{eq:eliminated shift eqn}
\end{equation}
We would like to use \eqref{eq:u_h coroll}, but the presence of
$\nabla v$ in the right hand side of \eqref{eq:eliminated shift eqn}
prevents this. To remove it, define $\phi \in Q_h$ such that
\begin{equation}
  k^2\langle \phi, v \rangle
  + k^2\llangle \phi, v \rrangle
  + \langle \nabla \phi, \nabla v \rangle
  = \hat{\delta} \langle \nabla v, \sigma_0 \rangle, \quad
  \forall v \in Q_h.
\end{equation}
Then, choosing $v=\phi$, and using the Schwarz' and Young's
inequalities gives
\begin{equation}
  k^2\|\phi\|_{0,\Omega}^2 +
  k^2\|\phi\|_{0,\partial\Omega}^2 +
  |\phi|_{1,\Omega}^2
  \leq \frac{1}{2}|\phi|_{1,\Omega}^2 + \frac{\hat{\delta}^2}{2}
  \|\sigma_0\|_{0,\Omega},
\end{equation}
and hence,
\begin{equation}
  \label{eq:phi est}
  \|\phi\|_{1,k,\Omega,\partial\Omega}^2
  := k^2\|\phi\|_{0,\Omega}^2
  + k^2\|\phi\|_{0,\partial\Omega}^2
  + |\phi|_{1,\Omega}^2
  \leq \hat{\delta}^2 \|\sigma_0\|_{0,\Omega}^2.
\end{equation}
Defining $\hat{u}=u-\phi \in Q_h$, we obtain
\begin{align}
  \nonumber
  \langle v, (\hat{\delta} - ik)^2 \hat{u} \rangle
  + \llangle v, (\hat{\delta} - ik) \hat{u}\rrangle
  + \langle \nabla v, \nabla \hat{u} \rangle
  = & -\hat{\delta} \langle v, (\hat{\delta} - ik) u_0 \rangle \\
  &  + \left(k^2 - (\hat{\delta} - ik)^2\right)\langle v, \phi \rangle \nonumber
  \\
&  + \left(k^2 - (\hat{\delta} - ik)\right)\llangle v, \phi \rrangle,
  \quad \forall v \in Q_h.
  \label{eq:uhat with phi}
\end{align}
Then, we can use \eqref{eq:u_h coroll} to obtain
\begin{align}
  \|\hat{u}\|_{1,k,\Omega}^2
  & \lesssim \hat{\delta}^2k^2\|u_0\|_{0,\Omega}^2
  + k^4\|\phi\|_{0,\Omega}^2 + k^4\|\phi\|_{0,\partial\Omega}^2, \\
  & \lesssim \hat{\delta}^2k^2\|u_0\|_{0,\Omega}^2
  + k^2\|\phi\|_{0,k,\Omega,\partial\Omega}^2, \\
  & \lesssim \hat{\delta}^2k^2\|u_0\|_{0,\Omega}^2
  + \hat{\delta}^2k^2\|\sigma_0\|_{0,\Omega}^2,
  \label{eq:uhat est}
\end{align}
assuming that $\hat{\delta} < k$,
having also used \eqref{eq:phi est} in the last inequality.
We can then use \eqref{eq:uhat est} to estimate $\|u\|_{1,k,\Omega}$,
\begin{align}
  \|u\|_{1,k,\Omega}^2 & \leq \|\hat{u}\|_{1,k,\Omega}^2
  + \|\phi\|_{1,k,\Omega}^2, \\
  & \lesssim \hat{\delta}^2k^2\|u_0\|_{0,\Omega}^2
  + \hat{\delta}^2k^2\|\sigma_0\|_{0,\Omega}^2 + \hat{\delta}^2\|\sigma_0\|_{0,\Omega}^2, \\
  & \lesssim \hat{\delta}^2k^2\|u_0\|_{0,\Omega}^2
  + \hat{\delta}^2k^2\|\sigma_0\|_{0,\Omega}^2,
  \label{eq:u 1,k,Omega}
\end{align}
assuming that $k>1$.

Now we have an estimate for $(\sigma,u)$ in the $\|\cdot
\|_{1,k,\Omega}$ norm. To get an estimate in the $\|\cdot\|_{\Hmixed}$ norm,
we need to get further estimates for $\|u\|_{0,\partial\Omega}$
and $\|\sigma\|_{0,\Omega}$.

To estimate $\|u\|_{0,\partial\Omega}$, take $v=\hat{u}$
in \eqref{eq:uhat with phi}
and take negative imaginary parts
to obtain
\begin{align}
  2\hat{\delta} k\|\hat{u}\|_{0,\Omega}^2
  + k\|\hat{u}\|_{0,\partial\Omega}^2
  \leq & \hat{\delta}|\hat{\delta}-ik|\|\hat{u}\|_{0,\Omega}\|u_0\|_{0,\Omega}
  \nonumber \\
  & + |k^2-(\hat{\delta}-ik)^2|\|\hat{u}\|_{0,\Omega}\|\phi\|_{0,\Omega}
  \nonumber \\
  & + |k^2-(\hat{\delta}-ik)|\|\hat{u}\|_{0,\partial\Omega}\|\phi\|_{0,\partial\Omega}, \\
  \nonumber
  \leq & \hat{\delta}|\hat{\delta}-ik| \times \nonumber \\
  & \quad \left(\frac{k}{2|\hat{\delta}-ik|\hat{\delta}}\|\hat{u}\|_{0,\Omega}^2
  +\frac{\hat{\delta}|\hat{\delta}-ik|}{2k}\|u_0\|_{0,\Omega}^2\right) \\
  & \nonumber + |k^2-(\hat{\delta}-ik)^2| \times \nonumber \\
  & \quad \left(\frac{k}{2|k^2-(\hat{\delta}-ik)^2|}
  \|\hat{u}\|_{0,\Omega}^2+
  \frac{|k^2-(\hat{\delta}-ik)^2|}{2k}
  \|\phi\|_{0,\Omega}^2\right) \\
  & + |k^2-(\hat{\delta}-ik)| \times \nonumber \\
  & \quad \left(\frac{k}{2|k^2-(\hat{\delta}-ik)|}
  \|\hat{u}\|_{0,\partial\Omega}^2
  +\frac{|k^2-(\hat{\delta}-ik)|}{2k}
  \|\phi\|_{0,\partial\Omega}^2\right), \\
  \leq & k\|\hat{u}\|_{0,\Omega}^2 + \frac{\hat{\delta}^2|\hat{\delta}-ik|^2}
       {2k}\|u_0\|_{0,\Omega}^2
       + \frac{|k^2-(\hat{\delta}-ik)^2|^2}{2k}\|\phi\|_{0,\Omega}^2        \nonumber
       \\
& \qquad       + \frac{k}{2}\|\hat{u}\|_{0,\partial\Omega}^2+
       \frac{|k^2-(\hat{\delta}-ik)|^2}{2k}\|\phi\|_{0,\partial\Omega}^2
\end{align}
so
\begin{align}
  k\|\hat{u}\|_{0,\partial\Omega}^2 \lesssim &
  k\|\hat{u}\|^2_{0,\Omega} + \hat{\delta}^2k \|u_0\|^2_{0,\Omega}
  + k^3\|\phi\|_{0,\Omega}^2
  + k^3\|\phi\|_{0,\partial\Omega}^2 \\
  \lesssim &
  \frac{1}{k}\underbrace{\|\hat{u}\|^2_{1,k,\Omega}}_{\lesssim
    \hat{\delta}^2k^2(\|u_0\|^2_{0,\Omega}
    + \|\sigma_0\|^2_{0,\Omega})}
    + \hat{\delta}^2k \|u_0\|^2_{0,\Omega}
  + k\underbrace{\|\phi\|_{1,k,\partial,\Omega}^2}_{\leq \hat{\delta}^2\|\sigma_0\|_{0,\Omega}^2} \\
  \lesssim & k\hat{\delta}^2\left(\|u_0\|_{0,\Omega}^2 +
  \|\sigma_0\|_{0,\Omega}^2\right),
\end{align}
having used \eqref{eq:phi est} and \eqref{eq:u 1,k,Omega}.

Hence by \eqref{eq:phi est}, the triangle and Young's inequalities
\begin{align}
  \label{eq:u partial Omega est}
  \|u\|_{0,\partial\Omega}^2 = \|\hat{u}+\phi\|_{0,\partial\Omega}^2 \lesssim & \|\hat{u}\|_{0,\partial\Omega}^2 + \|\phi\|_{0,\partial\Omega}^2 \\
  \lesssim & \hat{\delta}^2
  \left(
  \|u_0\|_{0,\Omega}^2 + \|\sigma_0\|_{0,\Omega}^2\right).
\end{align}

To estimate $\|\sigma\|_{0,\Omega}^2$, use $\tau=\sigma$ in
\eqref{eq:sigma shift}, leading to
\begin{align}
  (\hat{\delta} - ik)\|\sigma\|_{0,\Omega}^2 =
  -\hat{\delta} \langle \sigma, \sigma_0 \rangle
  + \langle \sigma, \nabla u \rangle.
\end{align}
After taking absolute values, and using Schwarz' and Young's
inequalities, we have
\begin{align}
  k\|\sigma \|^2_{0,\Omega} \leq& \hat{\delta}\|\sigma_0\|_{0,\Omega}
  \|\sigma\|_{0,\Omega}
  + |u|_{1,\Omega}\|\sigma\|_{0,\Omega}, \\
  \leq & \hat{\delta} \left(\frac{k}{4\hat{\delta}}\|\sigma\|_{0,\Omega}^2
  + \frac{2\hat{\delta}}{2k}\|\sigma_0\|_{0,\Omega}^2\right)
  + \frac{k}{4}\|\sigma\|_{0,\Omega}^2 + \frac{2}{2k}|u|_{1,\Omega}^2,
\end{align}
so
\begin{align}
  k\|\sigma\|_{0,\Omega}^2 \lesssim
  & \frac{1}{k}\|\sigma_0\|_{0,\Omega}^2
  + \frac{1}{k}|u|_{1,\Omega}^2, \\
  \lesssim &
  \frac{1}{k} \|\sigma_0\|^2_{0,\Omega}
  + \hat{\delta}^2 k\left(\|\sigma_0\|^2_{0,\Omega} + \|u_0\|^2_{0,\Omega}\right).
\end{align}
having used \eqref{eq:u 1,k,Omega}. Thus,
\begin{equation}
  \|\sigma\|_{0,\Omega}^2 \lesssim \hat{\delta}^2\left(\|\sigma_0\|^2_{0,\Omega} + \|u_0\|^2_{0,\Omega}\right).
  \label{eq:sigma est}
\end{equation}
Now we combine \eqref{eq:u 1,k,Omega} with \eqref{eq:u partial Omega
  est} and \eqref{eq:sigma est} to get
\begin{align}
  \|(\sigma,u)\|_H^2
  & = \hat{\delta} \|\sigma\|_{0,\Omega}^2 + \hat{\delta} \|u\|_{0,\Omega}^2
  + \|u\|_{0,\partial\Omega}^2, \\
  & \leq \hat{\delta} \|\sigma\|_{0,\Omega}^2 +
  \frac{\hat{\delta}}{k^2}\|u\|_{1,k,\Omega}^2
  + \|u\|_{0,\partial\Omega}^2, \\
  & \leq \hat{\delta}^3\left(\|u_0\|_{0,\Omega}^2 + \|\sigma_0\|_{0,\Omega}^2\right)
  + \hat{\delta}^3\left(\|u_0\|^2_{0,\Omega} + \|\sigma_0\|^2_{0,\Omega}\right)
  + \hat{\delta}^2\left(\|u_0\|^2_{0,\Omega} + \|\sigma_0\|^2_{0,\Omega}\right)\\
  & \lesssim \hat{\delta}\|(\sigma_0,u_0)\|_H^2.
\end{align}

Finally we may write
\begin{align}
  \|A_{\hat{\delta}}^{-1}A_0 - I\|_{\Hmixed} &= \sup_{0\neq(\sigma_0,u_0)\in V_h\times Q_h}
  \frac{\|(A_{\hat{\delta}}^{-1}A_0 - I)(\sigma_0,u_0)\|_{\Hmixed}}{\|(\sigma_0,u_0)\|_{\Hmixed}}, \\
  & =
  \sup_{0\neq(\sigma_0,u_0)\in V_h\times Q_h}\frac{\|(\sigma,u)\|_{\Hmixed}}{\|(\sigma_0,u_0)\|_{\Hmixed}}, \\
  & \leq C\hat{\delta}^{1/2},
\end{align}
for some constant $C>0$ independent of $k$ and $\hat{\delta}$. Hence, if we
choose $\hat{\delta}$ sufficiently small, we have $C\hat{\delta}^{1/2} < 1$, as
required.
\end{proof}

\begin{corollary}
\label{cor:shifted HSS preconditioning}
  Under the assumptions of Theorem \ref{th:g20153.5},
  and for sufficiently large $m$ and $\hat{\delta}$ sufficiently small,
  there exists a constant $c<1$ independent of $k$, such that
  \begin{equation}
    \|\tilde{A}_{\hat{\delta}}^{-1}A_0 - I \|_{\Hmixed} \leq c,
  \end{equation}
  which implies that the preconditioned Richardson iteration converges
  at a rate independent of $k$.
\end{corollary}
\begin{proof}
    The proof is identical to Corollary \ref{cor:shifted HSS
      preconditioning primal} for the primal case.
\end{proof}

\section{Numerical examples}\label{sec:numerics}

\begin{figure}
\centering
\includegraphics[width=0.4\linewidth]{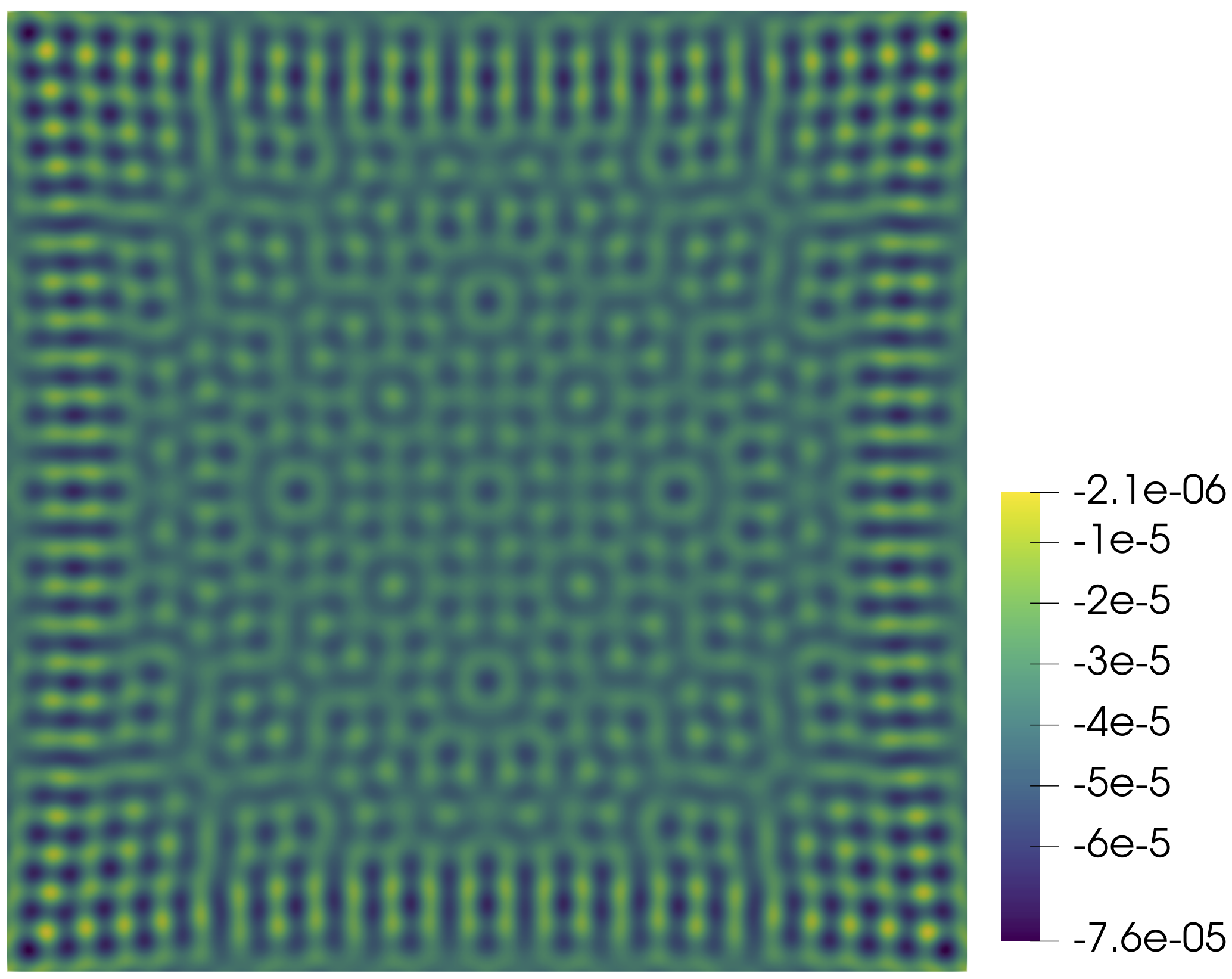}
\caption{Solution of the uniform source case when \(k=160\).}
\label{fig:uniform-source:solution}
\end{figure}

\begin{figure}
\centering
\includegraphics[width=0.4\linewidth]{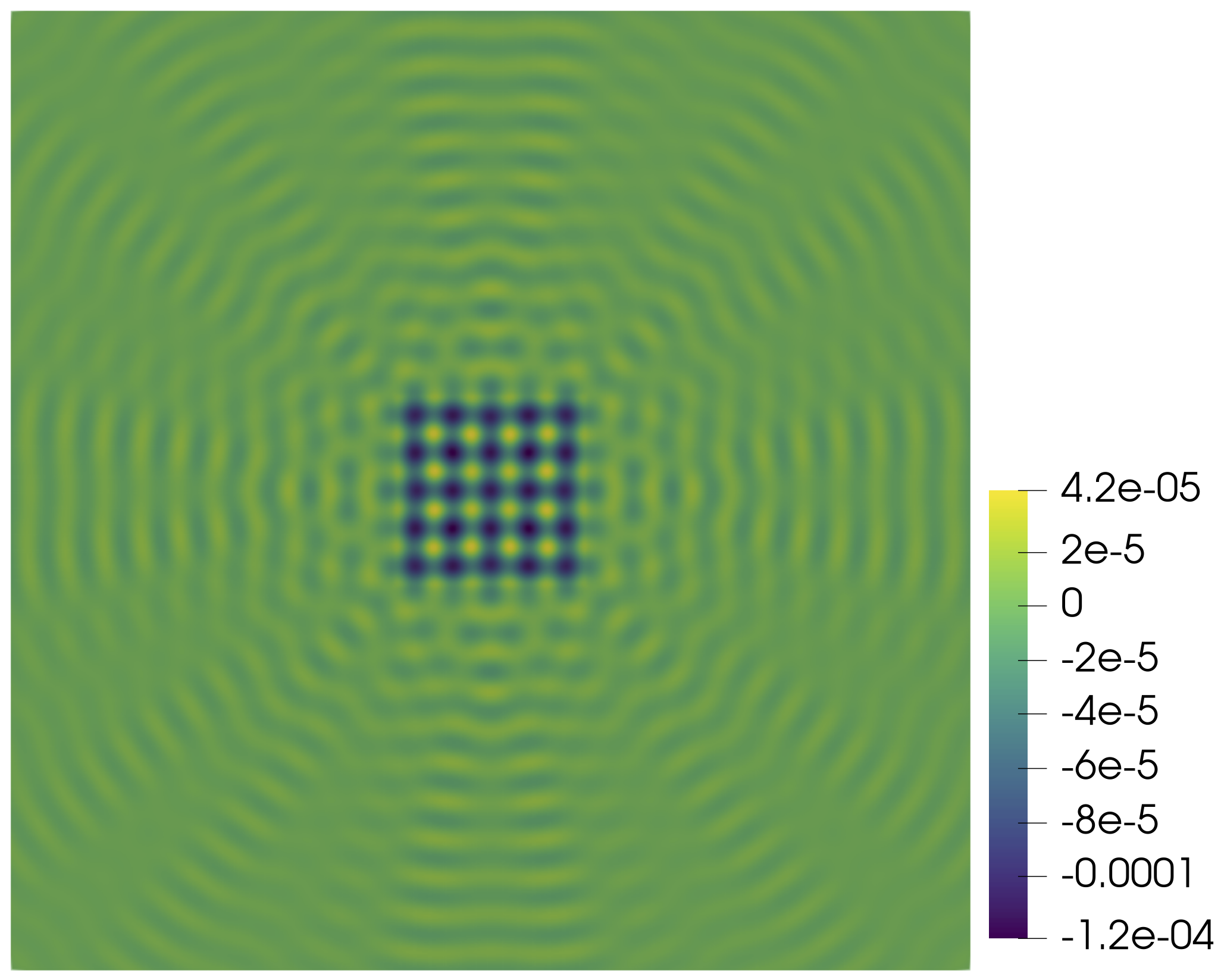}
\caption{Solution of the box source case when \(k=160\).}
\label{fig:box-source:solution}
\end{figure}


In this section we provide some results of numerical experiments that
demonstrate our theoretical results, as well as going beyond the
theory to demonstrate the practicality of the approach when the
shifted operator is solved approximately using multigrid with standard
transfer operators and smoother.

\subsection{Problem setups}
In our numerical experiments, we consider two idealised test problems
to illustrate our results. The first problem is the uniform source (example 5.2 from \cite{gander2015applying}) problem, solving
\begin{align}
    -k^2u - \nabla ^2u = 1
\end{align}
in the unit square with Sommerfeld boundary conditions on the entire
boundary. A plot of the numerical solution is given in Figure
\ref{fig:uniform-source:solution}.

The other problem we consider is the ``box source'' problem,
\begin{align}
    -k^2u - \nabla ^2u = \begin{cases}
        1 & \text{if }(x,y) \in [0.4, 0.6]\times[0.4, 0.6], \\
        0 & \text{otherwise},
    \end{cases}
\end{align}
solved in the unit square with Sommerfeld boundary conditions on the
entire boundary. An example plot of the numerical solution is given
in Figure \ref{fig:box-source:solution}.

Unless stated otherwise, the mesh is uniformly triangular with $c_0
k^{\frac{3}{2}}$ cells in every direction to account for the pollution
effect, with a constant scaling parameter \(c_0\) with default value
1. For the mixed and primal formulation we use $Q_{h}\times
V_{h}=[DG_0]^2\times CG_1$ and \(V_{h}=CG_1\), respectively. We
consider both preconditioned Richardson iteration (the case for which
we have theoretical results) and the FGMRES Krylov method
\cite{saad1993flexible}.  In all numerical examples the
preconditioning strategy is based on $\hat{\delta} = 2$, and the outer
iteration (either the fixed point Richardson iteration or the FGMRES
Krylov method) has a relative tolerance of $10^{-6}$.  As recommended
in \cite{cocquet2017large}, a random initial guess is used so that all
frequencies appear in the initial error, which ensures a realistic
indication of the overall convergence rate.

All numerical results are obtained using Firedrake and PETSc
(\cite{petsc-user-ref}, \cite{petsc-efficient}, \cite{Dalcin2011},
\cite{FiredrakeUserManual}, \cite{Kirby2017}).
The Python scripts are available at \cite{knook_2025}.

\subsection{Direct solution of the HSS inner problem}

\begin{table}[t]
\centering
\makebox[\textwidth][c]{
\begin{tabular}{ccccccc}
    \toprule
      & \multicolumn{3}{c}{Uniform source}      & \multicolumn{3}{c}{Box source} \\
        \cmidrule(rl){2-4}                        \cmidrule(rl){5-7}
\(k\) & \(\theta={1/2}\) & \(\theta=1\) & \(\theta={3/2}\) & \(\theta={1/2}\) & \(\theta=1\)  & \(\theta={3/2}\) \\
\midrule
 16  & 114 (456)  & 16 (256)  & 11 (704)   & 114 (456)  & 16 (256)  & 11 (704)   \\
 32  & 144 (864)  & 14 (448)  & 10 (1820)  & 144 (864)  & 14 (448)  & 10 (1820)  \\
 64  & 246 (1968) & 15 (960)  & 9  (4608)  & 247 (1976) & 15 (960)  & 9  (4608)  \\
128  & 430 (5160) & 15 (1920) & 9  (13041) & 431 (5172) & 15 (1920) & 9  (13041) \\
\bottomrule
\end{tabular}
}
\caption{Number of Richardson iterations to solve the primal formulation when \(B_{\hat{\delta}}\) is approximated with \(k^{\theta}\) HSS iterations, and each HSS iteration is solved directly. The total number of HSS iterations are shown in brackets.}
\label{tab:primal:hss-direct-its-richardson}

\makebox[\textwidth][c]{
\begin{tabular}{ccccccc}
    \toprule
      & \multicolumn{3}{c}{Uniform source}      & \multicolumn{3}{c}{Box source} \\
        \cmidrule(rl){2-4}                        \cmidrule(rl){5-7}
\(k\) & \(\theta={1/2}\) & \(\theta=1\) & \(\theta={3/2}\) & \(\theta={1/2}\) & \(\theta=1\)  & \(\theta={3/2}\) \\
\midrule
 16  & 29 (116) & 8 (128) & 6 (384)  & 29 (116) & 8 (128) & 6 (384)  \\
 32  & 38 (228) & 6 (192) & 4 (724)  & 38 (228) & 6 (192) & 4 (724)  \\
 64  & 49 (392) & 6 (384) & 2 (1024) & 50 (400) & 6 (384) & 2 (1024) \\
128  & 60 (720) & 6 (768) & 2 (2896) & 60 (720) & 6 (768) & 2 (2896) \\
\bottomrule
\end{tabular}
}
\caption{Number of FGMRES iterations to solve the primal formulation when \(B_{\hat{\delta}}\) is approximated with \(k^{\theta}\) HSS iterations, and each HSS iteration is solved directly. The total number of HSS iterations are shown in brackets.}
\label{tab:primal:hss-direct-its}
\end{table}

\begin{table}[t]
\centering
\makebox[\textwidth][c]{
\begin{tabular}{ccccccc}
    \toprule
      & \multicolumn{3}{c}{Uniform source}      & \multicolumn{3}{c}{Box source} \\
        \cmidrule(rl){2-4}                        \cmidrule(rl){5-7}
\(k\) & \(\theta={1/2}\) & \(\theta=1\) & \(\theta={3/2}\) & \(\theta={1/2}\) & \(\theta=1\)  & \(\theta={3/2}\) \\
\midrule
 16  &  63  (252) & 13  (208) & 10   (640) &  63  (252) & 13  (208) & 11   (704) \\
 32  & 123  (738) & 14  (448) & 10  (1820) & 122  (732) & 14  (448) & 10  (1820) \\
 64  & 224 (1792) & 14  (896) &  9  (4608) & 224 (1792) & 14  (896) &  9  (4608) \\
128  & 380 (4560) & 14 (1792) &  9 (13041) & 380 (4560) & 14 (1792) &  9 (13041) \\
\bottomrule
\end{tabular}
}
\caption{Number of Richardson iterations to solve the mixed formulation when \(A_{\hat{\delta}}\) is approximated with \(k^{\theta}\) HSS iterations, and each HSS iteration is solved directly. The total number of HSS iterations are shown in brackets.}
\label{tab:mixed:hss-direct-its-richardson}

\makebox[\textwidth][c]{
\begin{tabular}{ccccccc}
    \toprule
      & \multicolumn{3}{c}{Uniform source}      & \multicolumn{3}{c}{Box source}          \\
        \cmidrule(rl){2-4}                         \cmidrule(rl){5-7}
\(k\) & \(\theta={1/2}\) & \(\theta=1\) & \(\theta={3/2}\) & \(\theta={1/2}\) & \(\theta=1\) & \(\theta={3/2}\) \\
\midrule
 16  & 39  (156) & 9 (144) & 7  (448) & 39  (156) & 9 (144) & 7  (448) \\
 32  & 49  (294) & 8 (256) & 6 (1086) & 48  (288) & 8 (256) & 6 (1086) \\
 64  & 73  (584) & 8 (512) & 5 (2560) & 71  (568) & 8 (512) & 5 (2560) \\
128  & 84 (1008) & 7 (896) & 3 (4344) & 85 (1020) & 7 (896) & 3 (4344) \\
\bottomrule
\end{tabular}
}
\caption{Number of FGMRES iterations to solve the mixed formulation when \(A_{\hat{\delta}}\) is approximated with \(k^{\theta}\) HSS iterations, and each HSS iteration is solved directly. The total number of HSS iterations are shown in brackets.}
\label{tab:mixed:hss-direct-its}
\end{table}

Here, we provide numerical results that demonstrate the behaviour
prescribed by {Corollaries} \ref{cor:shifted HSS preconditioning primal} and
\ref{cor:shifted HSS preconditioning}. This is done by using an
iterative method to solve the mixed and primal problems (Richardson or FGMRES
iterations), using the preconditioners in Definitions \ref{def:primal shifted HSS}
and \ref{def:shifted HSS preconditioning mixed},
respectively. In both cases, the linear system arising in each HSS iteration is
solved using a direct solver \cite{MUMPS:1,MUMPS:2}.
We use FGMRES as the outer Krylov method despite the preconditioner being constant
to be consistent with the results in the next section where we use GMRES
in the multigrid approximation of the HSS operator, requiring the
use of a flexible outer method.

The results for the primal system are shown in
Tables \ref{tab:primal:hss-direct-its-richardson} and \ref{tab:primal:hss-direct-its},
and the results for the mixed system are shown in Tables
\ref{tab:mixed:hss-direct-its-richardson} and \ref{tab:mixed:hss-direct-its}.
The tables show the number of outer iterations required to reach the required
relative tolerance \emph{versus} $k$, with $\ceil{k^\theta}$ inner HSS iterations
in the preconditioners, for $\theta\in \{1/2,1,3/2\}$ (equivalently
$m\in \{\ceil{k^{-1/2}}, 1, \ceil{k^{1/2}}\}$).
We observe the prescribed $k$ independence in the outer iterations when
$\theta=1$. For $\theta=1/2$, the required number of iterations grows
with $k$, whilst for $\theta=3/2$, the required number of iterations
reduces as $k$ increases. This demonstrates that our requirement of
scaling the inner iterations in proportion to $k$ is necessary and
sufficient, as expected. The slight decline in the number of
iterations for $\theta=1$ with increasing $k$ is expected because the
Bernoulli estimate is only sharp in the limit $k\to \infty$.

Also in Tables \ref{tab:primal:hss-direct-its} and \ref{tab:mixed:hss-direct-its}
are the total number of HSS iterations across all outer iterations. Although for
$\theta=1/2$ the number of outer iterations increases with $k$, with FGMRES they
do so at a rate of roughly $\mathcal{O}(k^{1/2})$, at least for the range
of $k$ tested. This means that, in practice, the total number of HSS iterations
across all outer FGMRES iterations for $\theta=1/2$ is not much worse than the total
number for $\theta=1$ (and in a couple of instances actually slightly better),
even though our theory does not provide any guarantees for this case.
On the other hand, for $\theta=3/2$ the rate at which the number of outer iterations decreases with $k$ is not fast enough to counteract the growth in the number of HSS iterations per outer iteration, so the total number of HSS iterations is significantly higher than for $\theta=1/2$ or $\theta=1$.

The measured average convergence rate of the HSS iterations as an approximation
to the shifted operator \(A_{\hat{\delta}}\) with $\theta=1$ is shown in Table
\ref{tab:hss-direct-rate}, and compared to the theoretical bounds in Propositions
\ref{prop:hss primal} and \ref{prop:hss mixed}, respectively. We see that the
bounds are very tight in all cases.  Note that the bounds in Propositions
\ref{prop:hss primal} and \ref{prop:hss mixed} apply to the \textit{error}
contraction rates but the results in Table \ref{tab:hss-direct-rate} are the
\textit{residual} contraction rates, which is what is available
during the iterations, although the bounds appear to apply just as well
to the residual.
Finally, Tables \ref{tab:shift-direct-rate-richardson} and
\ref{tab:shift-direct-rate} show the measured average convergence rate of the
outer iterations in the case of $\theta=1$. We again see asymptotically
$k$-independent stable convergence rates with increasing $k$ for both the
Richardson and FGMRES iterations.

\begin{table}
\centering
\makebox[\textwidth][c]{
\begin{tabular}{cccccc}
    \toprule
    & & \multicolumn{4}{c}{\(\eta_{s}\)} \\
        \cmidrule(rl){3-6}
    & & \multicolumn{2}{c}{Primal}      & \multicolumn{2}{c}{Mixed} \\
        \cmidrule(rl){3-4}                \cmidrule(rl){5-6}
\(k\) & \(\nu_{s}\) & Uniform & Box & Uniform & Box  \\
\midrule
 16  & 0.8824 & 0.8789 & 0.8789 & 0.8804 & 0.8803 \\
 32  & 0.9394 & 0.9382 & 0.9381 & 0.9372 & 0.9372 \\
 64  & 0.9692 & 0.9687 & 0.9687 & 0.9686 & 0.9687 \\
128  & 0.9845 & 0.9847 & 0.9847 & 0.9841 & 0.9841 \\
\bottomrule
\end{tabular}
}
\caption{Convergence rate of the HSS iterations for the primal and mixed formulations with each source type when \(B_{\hat\delta}\) and \(A_{\hat{\delta}}\) are approximated with \(k\) HSS iterations and each HSS iteration is solved directly. Expected rate \(\nu_{s}\) from Propositions \ref{prop:hss primal} and \ref{prop:hss mixed} and measured rates \(\eta_{s}\).}
\label{tab:hss-direct-rate}

\centering
\makebox[\textwidth][c]{
\begin{tabular}{ccccc}
    \toprule
    & \multicolumn{4}{c}{\(\eta_{h}\)} \\
      \cmidrule(rl){2-5}
    & \multicolumn{2}{c}{Primal}      & \multicolumn{2}{c}{Mixed} \\
        \cmidrule(rl){2-3}              \cmidrule(rl){4-5}
\(k\) & Uniform & Box & Uniform & Box  \\
\midrule
 16  & 0.4955 & 0.4960 & 0.4317 & 0.4322 \\
 32  & 0.4592 & 0.4596 & 0.4662 & 0.4663 \\
 64  & 0.4813 & 0.4819 & 0.4636 & 0.4645 \\
128  & 0.4781 & 0.4780 & 0.4602 & 0.4609 \\
\bottomrule
\end{tabular}
}
\caption{Convergence rate \(\eta_{h}\) of the Richardson iteration for the primal and mixed formulations with each source type when \(B_{\hat\delta}\) and \(A_{\hat{\delta}}\) are approximated with \(k\) HSS iterations and each HSS iteration is solved directly.}
\label{tab:shift-direct-rate-richardson}

\centering
\makebox[\textwidth][c]{
\begin{tabular}{ccccc}
    \toprule
    & \multicolumn{4}{c}{\(\eta_{h}\)} \\
      \cmidrule(rl){2-5}
    & \multicolumn{2}{c}{Primal}      & \multicolumn{2}{c}{Mixed} \\
        \cmidrule(rl){2-3}                \cmidrule(rl){4-5}
\(k\) & Uniform & Box & Uniform & Box  \\
\midrule
 16  & 0.2271 & 0.2265 & 0.2806 & 0.2815 \\
 32  & 0.1410 & 0.1404 & 0.2390 & 0.2386 \\
 64  & 0.1234 & 0.1232 & 0.2090 & 0.2081 \\
128  & 0.1163 & 0.1162 & 0.1790 & 0.1791 \\
\bottomrule
\end{tabular}
}
\caption{Convergence rate \(\eta_{h}\) of the outer FGMRES iterations for the primal and mixed formulations with each source type when \(B_{\hat\delta}\) and \(A_{\hat{\delta}}\) are approximated with \(k\) HSS iterations and each HSS iteration is solved directly.}
\label{tab:shift-direct-rate}
\end{table}

\subsection{Iterative solution of the HSS inner problem}
In this section we go beyond our theoretical results and examine
convergence of the fully scalable algorithm when the linear
system arising in each HSS iteration is solved
approximately using one or more multigrid cycles with Jacobi
smoothers.

The multigrid setup is very similar between the primal and the mixed formulations.
We use 4 levels with a coarsening factor of $2\times$, resulting in $64\times$
more DoFs on the finest level than the coarsest level. On each level, including
the coarsest, we use GMRES iterations with a standard Jacobi preconditioner as
the smoother. For the primal formulation, a single W-cycle per HSS iteration with
5 GMRES iterations per level was found to give robust outer iteration counts, and is used
in all further results with this formulation. Empirically, we found that the mixed
formulation required a more accurate approximation of the HSS operator for robust
iteration counts, so we use two W-cycles per HSS iteration with 4 GMRES iterations
per level for all further results with this formulation. In the mixed formulation,
the $\mathcal{O}(k)$ shifted operator is obtained \emph{after} eliminating the
vector-valued field, so multigrid is applied to a scalar-valued field of the same
size in both the primal and mixed formulations.

Tables \ref{tab:primal:hss-mg-its} and
\ref{tab:mixed:hss-mg-its} show the required number of Krylov
iterations for both test problems at various $k$ and $\theta$ values,
confirming that we achieve our aspiration to asymptotically $k$
independent convergence rates for $\theta=1$, similar to the previous
experiments with directly solving each HSS iteration in Tables \ref{tab:primal:hss-direct-its} and \ref{tab:mixed:hss-direct-its}. This is
confirmed by Tables \ref{tab:hss-mg-rate} and \ref{tab:shift-mg-rate},
which show measured convergence rates for the HSS iteration and the outer
FGMRES iteration in the same manner as Tables \ref{tab:hss-direct-rate}
and \ref{tab:shift-direct-rate}. The theoretical bound on the HSS iteration
is still very tight in the multigrid approximated case, and we see again
confirmation of the asymptotic $k$ independence for the outer solver.

\begin{table}
\centering
\makebox[\textwidth][c]{
\begin{tabular}{ccccccc}
    \toprule
      & \multicolumn{3}{c}{Uniform source}      & \multicolumn{3}{c}{Box source}           \\
        \cmidrule(rl){2-4}                         \cmidrule(rl){5-7}
\(k\) & \(\theta={1/2}\) & \(\theta=1\)  & \(\theta={3/2}\) & \(\theta={1/2}\) & \(\theta=1\) & \(\theta={3/2}\) \\
\midrule
 16 & 29 (116) & 8 (128) & 6  (384) & 29 (116) & 8 (128) & 6  (384) \\
 32 & 38 (228) & 6 (192) & 4  (720) & 38 (228) & 6 (192) & 4  (720) \\
 64 & 55 (440) & 6 (384) & 2 (1024) & 55 (440) & 6 (384) & 2 (1024) \\
128 & 66 (792) & 6 (768) & 2 (2896) & 66 (792) & 6 (768) & 2 (2896) \\
\bottomrule
\end{tabular}
}
\caption{Number of FGMRES iterations to solve the primal formulation when \(B_{\hat{\delta}}\) is approximated with \(k^{\theta}\) HSS iterations, and each HSS iteration is approximated with a single W cycle. Total number of multigrid cycles are shown in brackets.}
\label{tab:primal:hss-mg-its}

\centering
\makebox[\textwidth][c]{
\begin{tabular}{ccccccc}
    \toprule
      & \multicolumn{3}{c}{Uniform source}      & \multicolumn{3}{c}{Box source}           \\
        \cmidrule(rl){2-4}                         \cmidrule(rl){5-7}
\(k\) & \(\theta={1/2}\) & \(\theta=1\)  & \(\theta={3/2}\) & \(\theta={1/2}\) & \(\theta=1\)  & \(\theta={3/2}\) \\
\midrule
 16  & 38  (304) & 9  (288) & 7  (896) & 38  (304) & 9  (288) & 7  (896) \\
 32  & 50  (600) & 8  (512) & 6 (2172) & 49  (588) & 8  (512) & 6 (2172) \\
 64  & 72 (1152) & 8 (1024) & 5 (5120) & 72 (1152) & 8 (1024) & 5 (5120) \\
128  & 85 (2040) & 7 (1792) & 3 (8688) & 85 (2040) & 7 (1792) & 3 (8688) \\
\bottomrule
\end{tabular}
}
\caption{Number of FGMRES iterations to solve the mixed formulation when \(A_{\hat{\delta}}\) is approximated with \(k^{\theta}\) HSS iterations, and each HSS iteration is approximated with two W cycles. Total number of multigrid cycles are shown in brackets.}
\label{tab:mixed:hss-mg-its}
\end{table}

\begin{table}
\centering
\makebox[\textwidth][c]{
\begin{tabular}{cccccc}
    \toprule
    & & \multicolumn{4}{c}{\(\eta_{s}\)} \\
        \cmidrule(rl){3-6}
    & & \multicolumn{2}{c}{Primal}      & \multicolumn{2}{c}{Mixed} \\
        \cmidrule(rl){3-4}                \cmidrule(rl){5-6}
\(k\) & \(\nu_{s}\) & Uniform & Box    & Uniform & Box  \\
\midrule
 16  & 0.8824 & 0.8778 & 0.8777 & 0.8806 & 0.8803 \\
 32  & 0.9394 & 0.9376 & 0.9376 & 0.9373 & 0.9372 \\
 64  & 0.9692 & 0.9681 & 0.9681 & 0.9687 & 0.9687 \\
128  & 0.9845 & 0.9844 & 0.9844 & 0.9842 & 0.9842 \\
\bottomrule
\end{tabular}
}
\caption{Convergence rate of the HSS iterations for the primal and mixed formulations with each source type when \(B_{\hat\delta}\) and \(A_{\hat{\delta}}\) are approximated with \(k\) HSS iterations and each HSS iteration is approximated with one or two W cycles for the primal or mixed formulations respectively. Expected rate \(\nu_{s}\) and measured rates \(\eta_{s}\).}
\label{tab:hss-mg-rate}

\centering
\makebox[\textwidth][c]{
\begin{tabular}{ccccc}
    \toprule
    & \multicolumn{4}{c}{\(\eta_{h}\)} \\
      \cmidrule(rl){2-5}
    & \multicolumn{2}{c}{Primal}      & \multicolumn{2}{c}{Mixed} \\
        \cmidrule(rl){2-3}                \cmidrule(rl){4-5}
\(k\) & Uniform & Box    & Uniform & Box  \\
\midrule
 16  & 0.2264 & 0.2269 & 0.2830 & 0.2806 \\
 32  & 0.1410 & 0.1411 & 0.2386 & 0.2394 \\
 64  & 0.1225 & 0.1226 & 0.2083 & 0.2088 \\
128  & 0.1130 & 0.1130 & 0.1796 & 0.1798 \\
\bottomrule
\end{tabular}
}
\caption{Convergence rate \(\eta_{h}\) of the outer FGMRES iterations for the primal and mixed formulations with each source type when \(B_{\hat{\delta}}\) and \(A_{\hat{\delta}}\) are approximated with \(k\) HSS iterations and each HSS iteration is approximated with one or two W cycles for the primal or mixed formulations respectively.}
\label{tab:shift-mg-rate}
\end{table}

\subsection{Parallel performance}

Finally, we examine the algorithm's behaviour in a high performance
computing setting, focussing on the primal formation, with larger $k$,
and kept $\theta=1$. The computations were done on ARCHER2, the UK national
supercomputer \cite{Beckett2024}. ARCHER2 has 128 cores per node, however we
used 64 cores per node in this computation since we are otherwise limited by
memory bandwidth due to the low arithmetic intensity of Jacobi smoothers.

We use a brief performance model to judge the parallel scaling.
The total number of DoFs in \(d\) dimensions scales as \(DoF=\mathcal{O}((k^{3/2})^d)\).
For standard multigrid components, the communication load per processor is independent of the wavenumber so we can use $P=\mathcal{O}(k^{3d/2})$ processors and expect very good scaling from the multigrid algorithm.
The work per HSS iteration scales with the DoF count because the number of multigrid cycles and levels are fixed, so if the number of HSS iterations is $\mathcal{O}(k)$ then the total work is $W=\mathcal{O}(k^{1+(3d/2)})$.
Therefore, we expect the wallclock time to scale linearly with the wavenumber $T=W/P=\mathcal{O}(k)$.
In 2 dimensions this results in \(\mathcal{O}(k^{3})\) DoFs and processors and \(\mathcal{O}(k^{4})\) computational work.

The convergence rates are shown in Table \ref{tab:weak-scaling-2D-convergence}; we see that our observations in previous experiments are extended to higher $k$ values, up to $k=1024$ in the largest case.
Once again the HSS convergence bounds are tight, and we see further evidence of the convergence rate for the outer iterations approaching a stable value for larger $k$.
While the results using fixed point Richardson iterations verify the theoretical results, we also show that, in practice, FGMRES can be used to significantly reduce the total number of iterations required - in these examples by more than half.
In Table \ref{tab:weak-scaling-2D-timing} and in Figure \ref{fig:timing-2D} we show parallel timing results using \(\mathcal{O}(k^{3})\) processors.
We see very robust linear wallclock time scaling with $k$, as we hoped, for both examples and with either Richardson or FGMRES outer iterations.

The theoretical results in sections \ref{sec:HSS shifted} and \ref{sec:HSS 0} did not rely on any assumption on the number of spatial dimensions of the problem.
To verify that the results are in fact robust to the spatial dimension we show some results for the 3D Helmholtz equation.
Tables \ref{tab:weak-scaling-3D-convergence} and \ref{tab:weak-scaling-3D-timing} show convergence and timing results respectively for the uniform source and the (3D extension of) box source examples.
In 3D the number of DoFs scales with \(\mathcal{O}(k^{4.5})\) so we show a narrower wavenumber range \(k\in\{20,40,80\}\), but we still see very similar behaviour to the 2D examples.
In Table \ref{tab:weak-scaling-3D-convergence} we see that the iteration counts are almost constant over this wavenumber range, and that the convergence bound for the HSS iterations for the shifted operator are again very tight.
The timing results in Table \ref{tab:weak-scaling-3D-convergence} and Figure \ref{fig:timing-3D}
show that we again achieve wallclock times which scale linearly with \(k\).

\begin{figure}
\centering
\begin{subfigure}{0.49\textwidth}
   \centering
   \includegraphics[width=0.99\textwidth]{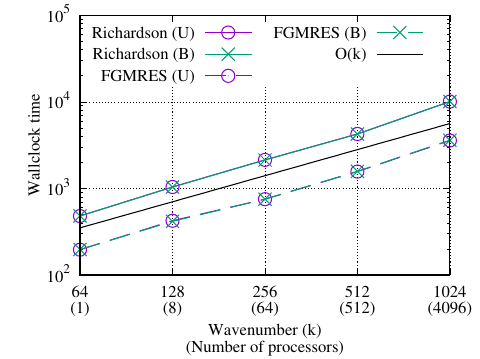}
   \caption{}
   \label{fig:timing-2D}
\end{subfigure}
\begin{subfigure}{0.49\textwidth}
   \centering
   \includegraphics[width=0.99\textwidth]{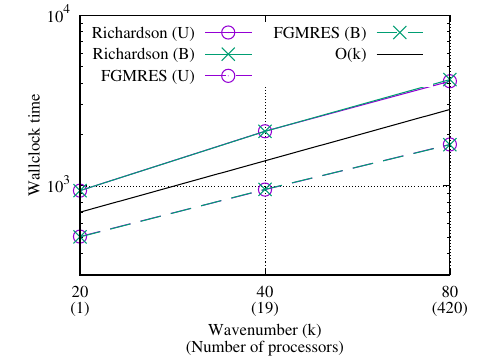}
   \caption{}
   \label{fig:timing-3D}
\end{subfigure}
\caption{Weak scaling for the primal formulation in 2D (\ref{fig:timing-2D}) and 3D (\ref{fig:timing-3D}) 3D.}
\label{fig:scaling}
\end{figure}

\begin{table}
  \centering
  \begin{tabular}{rccccc}
    \multicolumn{6}{c}{Richardson}\\
    \toprule
  \(k\) & {\(M_{h}\)} & \(\eta_{h}\) & {\(\eta_{s}\)} & {\(\nu_{s}\)} & \(\eta_{mg}\) \\
    \midrule
  64 & 18 & 0.4903 & 0.9659 & 0.9692 & 0.00409 \\
 128 & 18 & 0.4899 & 0.9833 & 0.9845 & 0.00284 \\
 256 & 18 & 0.4844 & 0.9917 & 0.9922 & 0.00165 \\
 512 & 17 & 0.4720 & 0.9960 & 0.9961 & 0.00086 \\
1024 & 18 & 0.4957 & 0.9978 & 0.9980 & 0.00057 \\
\bottomrule
  \end{tabular}

    \begin{tabular}{rccccc}
    \multicolumn{6}{c}{FGMRES}\\
    \toprule
  \(k\) & {\(M_{h}\)} & \(\eta_{h}\) & {\(\eta_{s}\)} & {\(\nu_{s}\)} & \(\eta_{mg}\) \\
  \midrule
  64 & 8 & 0.1760 & 0.9677 & 0.9692 & 0.00336 \\
 128 & 8 & 0.1597 & 0.9837 & 0.9845 & 0.00199 \\
 256 & 7 & 0.1316 & 0.9920 & 0.9922 & 0.00109 \\
 512 & 7 & 0.1247 & 0.9960 & 0.9961 & 0.00072 \\
1024 & 7 & 0.1222 & 0.9980 & 0.9980 & 0.00057 \\
\bottomrule
\end{tabular}
  \caption{Weak scaling convergence for 2D Helmholtz with a box source. Iteration counts \(M_{h}\) and contraction rates \(\eta_{h}\) for the Helmholtz operator \eqref{eq:primal} without shift; HSS iteration contraction rates, measured rate \(\eta_{s}\) and expected rate \(\nu_{s}\) from Eq. \(\eqref{eq:primal-hss-contraction}\) for the Helmholtz operator \eqref{eq:primal} with shift \(\hat{\delta}\); multigrid contraction rates \(\eta_{mg}\) for the shifted Helmholtz operator \eqref{eq:primal-hss-system}. With a uniform source the iteration counts \(M_{h}\) are identical and the contraction rates \(\eta\) are almost identical.}
  \label{tab:weak-scaling-2D-convergence}
\end{table}

\begin{table}
  \centering
\setlength{\tabcolsep}{5pt}
\begin{tabular}{rrcrr}
   \multicolumn{5}{c}{Richardson}\\
   \toprule
   \(k\) & {\(P\)} & \(DoF\) & {\(T\) (U)} & {\(T\) (B)} \\
   \midrule
  64 &    1 & \(6.60\!\times\!10^4\) &   479 &   481 \\
 128 &    8 & \(5.20\!\times\!10^5\) &  1039 &  1034 \\
 256 &   64 & \(4.20\!\times\!10^6\) &  2132 &  2123 \\
 512 &  512 & \(3.37\!\times\!10^7\) &  4248 &  4234 \\
1024 & 4096 & \(2.68\!\times\!10^8\) & 10054 & 10042 \\
\bottomrule
\end{tabular}\\
\begin{tabular}{rrcrr}
   \multicolumn{5}{c}{FGMRES}\\
   \toprule
   \(k\) & {\(P\)} & \(DoF\) & {\(T\) (U)} & {\(T\) (B)} \\
   \midrule
  64 &    1 & \(6.60\!\times\!10^4\) &  196 &   197 \\
 128 &    8 & \(5.20\!\times\!10^5\) &  423 &   416 \\
 256 &   64 & \(4.20\!\times\!10^6\) &  751 &   750 \\
 512 &  512 & \(3.37\!\times\!10^7\) & 1564 &  1573 \\
1024 & 4096 & \(2.68\!\times\!10^8\) & 3568 &  3599 \\
\bottomrule
\end{tabular}
  \caption{Weak scaling timing for the 2D Helmholtz equation with wavenumber \(k\) with Richardson and FGMRES outer iterations. Number of processors \(P\); degrees of freedom \(DoF\); wallclock time \(T\) with uniform (U) and box (B) source terms. \(66\times10^{3}\) degrees of freedeom per processor. Wallclock time is expected to scale with \(\mathcal{O}(k)\).}
  \label{tab:weak-scaling-2D-timing}
\end{table}

\begin{table}
  \centering
  \begin{tabular}{rccccc}
    \multicolumn{6}{c}{Richardson}\\
    \toprule
  \(k\) & {\(M_{h}\)} & \(\eta_{h}\) & {\(\eta_{s}\)} & {\(\nu_{s}\)} & \(\eta_{mg}\) \\
    \midrule
20  & 17 & 0.4539 & 0.9001 & 0.9048 & 0.02034 \\
40  & 18 & 0.4848 & 0.9466 & 0.9512 & 0.01794 \\
80  & 17 & 0.4612 & 0.9746 & 0.9753 & 0.01125 \\
\bottomrule
  \end{tabular}

    \begin{tabular}{rccccc}
    \multicolumn{6}{c}{FGMRES}\\
    \toprule
  \(k\) & {\(M_{h}\)} & \(\eta_{h}\) & {\(\eta_{s}\)} & {\(\nu_{s}\)} & \(\eta_{mg}\) \\
  \midrule
20  & 10 & 0.2587 & 0.9029 & 0.9048 & 0.01931 \\
40  &  9 & 0.2032 & 0.9492 & 0.9512 & 0.01507 \\
80  &  8 & 0.1677 & 0.9748 & 0.9753 & 0.00950 \\
\bottomrule
\end{tabular}
  \caption{Weak scaling convergence for 3D Helmholtz with a box source. Iteration counts \(M_{h}\) and contraction rates \(\eta_{h}\) for the Helmholtz operator \eqref{eq:primal} without shift; HSS iteration contraction rates, measured rate \(\eta_{s}\) and expected rate \(\nu_{s}\) from Eq. \(\eqref{eq:primal-hss-contraction}\) for the Helmholtz operator \eqref{eq:primal} with shift \(\hat{\delta}\); multigrid contraction rates \(\eta_{mg}\) for the shifted Helmholtz operator \eqref{eq:primal-hss-system}. With a uniform source the iteration counts \(M_{h}\) are identical and the contraction rates \(\eta\) are almost identical.}
  \label{tab:weak-scaling-3D-convergence}
\end{table}

\begin{table}
  \centering
\setlength{\tabcolsep}{5pt}
\begin{tabular}{rrcrr}
   \multicolumn{5}{c}{Richardson}\\
   \toprule
   \(k\) & {\(P\)} & \(DoF\) & {\(T\) (U)} & {\(T\) (B)} \\
   \midrule
20  &   1 & \(1.18\!\times\!10^5\) &  937 &  936 \\
40  &  19 & \(2.15\!\times\!10^6\) & 2087 & 2083 \\
80  & 420 & \(4.70\!\times\!10^7\) & 4097 & 4189 \\
\bottomrule
\end{tabular}\\
\begin{tabular}{rrcrr}
   \multicolumn{5}{c}{FGMRES}\\
   \toprule
   \(k\) & {\(P\)} & \(DoF\) & {\(T\) (U)} & {\(T\) (B)} \\
   \midrule
20  &   1 & \(1.18\!\times\!10^5\) &  504 &  502 \\
40  &  19 & \(2.15\!\times\!10^6\) &  950 &  952 \\
80  & 420 & \(4.70\!\times\!10^7\) & 1745 & 1741 \\
\bottomrule
\end{tabular}\\
  \caption{Weak scaling timing for the 3D Helmholtz equation with wavenumber \(k\) with Richardson and FGMRES outer iterations. Number of processors \(P\); degrees of freedom \(DoF\); wallclock time \(T\) with uniform (U) and box (B) source terms. \(118\times10^{3}\) degrees of freedeom per processor. Wallclock time is expected to scale with \(\mathcal{O}(k)\).}
  \label{tab:weak-scaling-3D-timing}
\end{table}

\section{Summary and outlook}
\label{sec:summary}

In this article we introduced and analysed a preconditioner for the
indefinite Helmholtz equation based on HSS iteration applied to a
$\mathcal{O}(1)$ shifted operator. The HSS iteration requires the
solution of another $\mathcal{O}(k)$ shifted operator which is known
to be suitable for multigrid with standard components. We proved and
demonstrated numerically that when $\mathcal{O}(k)$ HSS inner iterations
are used in the preconditioner, $\mathcal{O}(1)$ outer iterations are
required, i.e. the number of outer iterations is independent of $k$
and the mesh resolution. We then demonstrated numerically that when
the inverse $\mathcal{O}(k)$ shifted operator is replaced by a
multigrid cycle using standard components, the robustness still holds.
Thus, we can rely upon standard multigrid parallel scalability for the
solver, with the only serial component being the $k$ inner iterations.
Therefore we expect $\mathcal{O}(k)$ wallclock times within the range
of optimal scalability of the multigrid for a given mesh resolution,
which we demonstrated in practice up to high wavenumbers.

In future work we will extend our analysis and computational experiments
to higher-order discretisations, H(div)-$L_2$ mixed formulations, as
well as harmonic elastodynamics and harmonic Maxwell equations.

\section*{Acknowledgements}
This work used the ARCHER2 UK National Supercomputing Service
(https://www.archer2.ac.uk). The authors are grateful to Niall
Bootland, Martin Gander, Ivan Graham, and Euan Spence, for their
useful discussions and feedback about this work.

\bibliographystyle{siamplain}
\bibliography{helm}
\end{document}